\theoremstyle{plain}
  \newtheorem{theorem}{Theorem}[section]
  \newtheorem{corollary}{Corollary}[section]
  \newtheorem{lemma}{Lemma}[section]
  \newtheorem{proposition}{Proposition}[section]
\theoremstyle{definition}
  \newtheorem{definition}{Definition}[section]
\theoremstyle{remark}
  \newtheorem{remark}{Remark}[section]
\newcommand{\C}{\mathbb{C}}
\renewcommand{\P}{\mathbb{P}}
\newcommand{\CP}{\mathbb{C}\mathrm{P}}
\newcommand{\tr}{\operatorname{tr}}
\newcommand{\V}{\mathcal{V}}
\newcommand{\F}{\mathcal{F}}
\newcommand{\BBindex}{\operatorname{BB}}
\newcommand{\CSindex}{\operatorname{CS}}
\newcommand{\Res}{\operatorname{Res}}
\newcommand{\Aff}[2]{\operatorname{Aff}({#1},{#2})}
\DeclareMathOperator{\Sing}{Sing}
\DeclareMathOperator{\Spec}{Spec}
\DeclareMathOperator{\LC}{LC}
\newcommand{\orcid}[1]{ORCID~ID:~\href{https://orcid.org/#1}{#1}}
\title[Spectra of~quadratic vector fields on~$\mathbb{C}^2$]{Spectra of~quadratic vector fields on~$\mathbb{C}^2$: the~missing relation}
\author[Yu.~Kudryashov]{Yury Kudryashov}
\address{Department of Mathematics, Cornell University\\ Ithaca, NY\\ 14850}
\email{\href{mailto:ik333@cornell.edu}{ik333@cornell.edu}}
\thanks{The first author is supported by the grant RFBR 16-01-00748.\\ \orcid{0000-0003-4286-9276}}
\author[V.~Ramírez]{Valente Ramírez}
\address{Institut de Recherche Math\'{e}matique de Rennes\\ Universit\'{e} de Rennes 1\\ UMR 6625\\ Rennes\\ France}
\email{\href{mailto:valente.ramirez@univ-rennes1.fr}{valente.ramirez@univ-rennes1.fr}}
\thanks{The second author was supported by the grants PAPIIT IN-106217 and CONACYT 219722. He also acknowledges the support of the Centre Henri Lebesgue ANR-11-LABX-0020-01.\\ \orcid{0000-0002-4594-430X}}
\keywords{Quadratic vector fields, holomorphic foliations, spectra of singularities, index theorems}
\subjclass[2010]{37F75, 32M25, 32S65}
\begin{document}

\begin{abstract}
    Consider a~quadratic vector field on $\C^2$ having an invariant line at~infinity and isolated singularities only.
    We define the \emph{extended spectra of~singularities} to be the collection of the spectra of the linearization matrices of each singular point over the affine part, together with all the characteristic numbers (i.e.~Camacho-Sad indices) at infinity.
    This collection consists of~$11$~complex numbers, and is invariant under affine equivalence of~vector fields.

    In this paper we describe all polynomial relations among these numbers.
    There are $5$ independent polynomial relations;
    four of~them follow from the Euler-Jacobi, the Baum-Bott, and the Camacho-Sad index theorems, and are well-known.
    The fifth relation was, until now, completely unknown.
    We provide an~explicit formula for~the missing 5th relation, discuss it's meaning and prove that it cannot be formulated as~an~index theorem.
\end{abstract}

\maketitle

\section{Introduction}

This work deals with \emph{generic quadratic vector fields} on the affine plane~$\C^2$, and the singular holomorphic foliations that these vector fields define on the projective plane $\P^2$.

The space of polynomial vector fields of degree at most $n$ has a natural vector space structure.
We will say that a property is \emph{generic} for vector fields of degree~$n$, if it is satisfied by~every vector field in~some dense Zariski open subset of~this vector space.
For example, a~generic vector field~$v$ of~degree~$n$ has exactly~$n^2$ isolated singularities.
Also, it is well known that the \emph{foliation} of~$\C^2$ defined~by~$v$ can~be extended to a~foliation of~$\P^2$ with isolated singularities.
This extension~$\F_v$ is unique, and generically $\F_v$ has an~invariant line at~infinity.
This means that the line $\mathcal{L}=\P^2\setminus\C^2$, once a finite number of singularities are removed, is a leaf of the foliation $\F_v$.
In the generic case, the number of singularities on $\mathcal{L}$ is exactly $n+1$.
We deal exclusively with vector fields with these properties.

\subsection{The extended spectra of singularities}

Let $p$ be an isolated singular point of some vector field $v = P(x,y)\frac{\partial}{\partial x} + Q(x,y)\frac{\partial}{\partial y}$, and consider the \emph{linearization matrix}
\[
    Dv(p) =
        \left.
        \begin{pmatrix}
            P'_x & P'_y \\
            Q'_x & Q'_y \\
        \end{pmatrix}
        \right|_{(x,y)=p.}
\]
Analytically equivalent vector fields have conjugate linearization matrices, hence the spectrum of the linearization matrix at each singular point is an analytic invariant.

\begin{definition}
    Let $p$ be a singular point of $v$.
    We define the \emph{spectrum} of $v$ at $p$ as the ordered pair $\Spec(v,p) = (\tr Dv(p),\det Dv(p))$.
    The \emph{finite spectra of~singularities} of~$v$ is the set (formally, the multiset)
    \[
        \Spec v = \set{\Spec(v,p)|v(p)=0}.
    \]
\end{definition}

In order to study the extended foliation in a neighborhood of the line at infinity we introduce the following change of coordinates: $z=\frac{1}{x}$, $w=\frac{y}{x}$.
A simple computation shows that, in these coordinates, a generic degree $n$ polynomial vector field induces a foliation given by an equation of the form
\begin{equation}
    \label{eq:near-infty-apart}
    \frac{dz}{dw} = z\sum_{j=1}^{n+1} \frac{\lambda_j}{w-w_j} + O(z^2).
\end{equation}
The line at infinity is given by $\mathcal{L}=\{z=0\}$, and the singular points on it correspond to the poles $w_j$.
The \emph{characteristic numbers at infinity} are defined to be the residues~$\lambda_j$, which are precisely the Camacho-Sad indices $\lambda_j=\CSindex(\F_v,\mathcal{L},w_j)$.

\begin{definition}
    \label{def:extendedSpectra}
    The \emph{extended spectra of singularities} of a polynomial vector field $v$ is the collection of the finite spectra of singularities, together with the characteristic numbers at infinity.
\end{definition}

\begin{remark}
    Since each number in the extended spectra is a local analytic invariant, affine equivalent vector fields have the same extended spectra.
\end{remark}

Note that even though we work with local invariants, we only consider the spectra as a collection of these invariants taken over all singularities. Therefore, the questions we deal with are of a global nature.

\subsection{Relations coming from index theorems}

Let us fix the following class of~quadratic vector fields.

\begin{definition}%
    \label{def:classesV2A2}
    Denote by $\V_2$ the space of all quadratic vector fields $v$ on $\C^2$ such that
    \begin{itemize}
      \item $v$ has exactly~$4$ isolated singularities;
      \item the extended foliation~$\F_v$ has an invariant line at infinity carrying exactly~$3$ singular points.
    \end{itemize}
    For technical reasons, we pass to~a~finite cover, and assume that both finite and infinite singularities of~$v$ are enumerated.
\end{definition}

Then the extended spectra of~a~vector field $v\in\V_2$ is an ordered tuple of~$11$~complex numbers:
$8$~coming from the finite spectra and $3$~characteristic numbers at infinity.
The object of~this paper is to give a~complete description of~\emph{all the algebraic relations} among these $11$~numbers.

These numbers are related by four classical \emph{index theorems}:
\begin{align}
  \sum_{v(p)=0}\frac{1}{\det Dv(p)} &= 0, \label{eq:EJ1} \tag{EJ1} \\
  \sum_{v(p)=0}\frac{\tr Dv(p) }{\det Dv(p)} &= 0, \label{eq:EJ2} \tag{EJ2}\\
  \sum_{p\in\Sing\F_v} \hspace{-8pt}\BBindex(\F_v,p) &= 16, \label{eq:BB} \tag{BB} \\
  \sum_{p\in \mathcal{L}\cap\Sing\F_v}\hspace{-12pt}\CSindex(\F_v,\mathcal{L},p) &= 1. \label{eq:CS} \tag{CS}
\end{align}
These are, respectively, the Euler-Jacobi relations, the Baum-Bott theorem and the Camacho-Sad theorem, see \autoref{sec:index-theorems} for details.

\subsection{The new “hidden” relation}
Let~us do a~simple dimension count.
On~the one hand, the space $\V_2$ has dimension~$12$, and the affine group $\Aff{2}{\C}$ has dimension~$6$.
Therefore, the quotient (in the sense of Geometric Invariant Theory) $\V_2\sslash\Aff{2}{\C}$ has dimension~6.
On the other hand, the extended spectra, which~is of~dimension~$11$, modulo the $4$~equations above is a space of dimension~$7$.
This gap in the dimensions implies that there must exist at least one more algebraic relation among these numbers.
This last \emph{hidden relation} was, until very recently, completely unknown.
In order to describe this relation, let us introduce some notation.

Let $v\in\V_2$ have singularities $p_1,\ldots,p_4$ on $\C^2$ and singular points at infinity $w_1$, $w_2$, $w_3$.
Put $\Spec(v,p_k)=(t_k,d_k)$, and let $\lambda_j$ be the characteristic number of $w_j$.
\begin{remark}
  The fact that all $7$~singular points are different implies that all four determinants $d_k$, and all characteristic numbers $\lambda_j$ are non-zero numbers.
\end{remark}
Let $\Lambda$ denote the product $\Lambda = \lambda_1\lambda_2\lambda_3$ and define $\mathcal R$ be the graded polynomial ring $\mathcal R=\C[t_1,\dotsc,t_4,d_1,\dotsc,d_4,\Lambda]$, where the generators $t_k$ are of degree 1, $d_k$ are of degree 2, and $\Lambda$ is of degree zero.

\begin{theorem}
    \label{thm:hidden-no-details}
    There exists a~polynomial~$H\in\mathcal R$ such that
    \begin{enumerate}
    \item\label{item:H-relation}
      the extended spectra of any quadratic vector field in $\V_2$ satisfies
      \begin{equation}
        \label{eq:hiddenRelation}
        H(t;d;\Lambda)=0;
      \end{equation}
    \item\label{item:H-independent}
      the above equality is independent from~\eqref{eq:EJ1}--\eqref{eq:CS};
    \item\label{item:H-full}
      if~$F\in\C[t;d;\lambda]$ is~another polynomial vanishing on the extended spectra of every vector field in $\V_2$, then $F=0$ follows from \eqref{eq:hiddenRelation}, together with the equalities \eqref{eq:EJ1}--\eqref{eq:CS} and inequalities $d_k\neq 0$, $k=1,\dotsc,4$, and~$\lambda_j\neq 0$, $j=1,2,3$.
    \end{enumerate}
\end{theorem}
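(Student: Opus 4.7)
The strategy is dimensional. The extended spectrum defines an $\Aff{2}{\C}$-invariant morphism $\sigma\colon\V_2\to\C^{11}$, so its image $\Sigma:=\overline{\sigma(\V_2)}$ is irreducible of dimension at most $\dim(\V_2\sslash\Aff{2}{\C})=6$. It is contained in the subvariety $W\subset\C^{11}$ cut out by~\eqref{eq:EJ1}--\eqref{eq:CS}, which on the open locus $d_1d_2d_3d_4\Lambda\ne0$ is a transverse complete intersection of dimension~$7$. The first step is to verify both dimensions: $\dim W=7$ via the Jacobian of~\eqref{eq:EJ1}--\eqref{eq:CS} at a generic tuple, and $\dim\Sigma=6$ by exhibiting a $v_0\in\V_2$ at which $d\sigma$ has rank~$6$ transverse to the Lie algebra of~$\Aff{2}{\C}$. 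These together force~$\Sigma$ to be an irreducible hypersurface in~$W$, cut out in~$W$ by a single irreducible polynomial~$H$ well defined up to a unit in the coordinate ring of~$W$.

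To produce~$H$ concretely I would fix a slice to the $\Aff{2}{\C}$-action, reducing $\V_2$ to a rational six-parameter family. Using the induced action on~$\mathcal L\cong\P^1$ to place $w_1,w_2,w_3$ at $0,1,\infty$ and the three-dimensional stabilizer of this configuration to normalize three further coefficients in the affine chart, one brings the vector field to a convenient normal form in which the $t_k$, $d_k$, $\lambda_j$ are explicit rational functions of six parameters and of the roots of a polynomial system locating the affine singularities. Eliminating the parameters via resultants or a Gr\"obner basis yields the ideal of polynomial relations vanishing on~$\Sigma$; by the dimension count, this ideal is generated, in the ring localized at the $d_k$ and $\lambda_j$, by~\eqref{eq:EJ1}--\eqref{eq:CS} together with one extra generator which we take as~$H$. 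The permutation symmetry of $w_1,w_2,w_3$ lets us symmetrize~$H$ in the~$\lambda_j$; the standard formula $\BBindex=(1+\lambda_j)^2/\lambda_j$ at an infinity singularity converts~\eqref{eq:BB} (together with~\eqref{eq:CS}) into the identity $\lambda_1\lambda_2+\lambda_1\lambda_3+\lambda_2\lambda_3=\Lambda\bigl(9-\sum_k t_k^2/d_k\bigr)$, so after substituting $\lambda_1+\lambda_2+\lambda_3=1$ and this expression every symmetric function of the~$\lambda_j$ is expressible in the $t_k$, $d_k$ and~$\Lambda$; clearing the denominators $d_k$ places~$H$ in the polynomial ring~$\mathcal R$. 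Part~\eqref{item:H-independent} is then proved by exhibiting any one tuple on~$W$ at which this~$H$ is nonzero, or equivalently by observing that the output of the elimination does not lie in the ideal generated by~\eqref{eq:EJ1}--\eqref{eq:CS}.

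For part~\eqref{item:H-full}, invert $\Delta:=d_1d_2d_3d_4\Lambda$ to pass to the smooth open~$W^\circ\subset W$ where all singularities are nondegenerate; there $W^\circ$ is irreducible of dimension~$7$ and $\Sigma\cap W^\circ$ is an irreducible codimension-one subvariety. Provided the localized coordinate ring of~$W^\circ$ is factorial---which in this setting can be verified by writing $W^\circ$ as a rational variety, for instance by solving~\eqref{eq:EJ1}--\eqref{eq:EJ2} linearly for two of the $t_k$ after inverting a suitable subdeterminant and then treating~\eqref{eq:BB},~\eqref{eq:CS} similarly---the height-one prime of $\Sigma\cap W^\circ$ is principal, generated up to a unit by~$H$. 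Hence any~$F$ vanishing on~$\Sigma$ is a multiple of~$H$ in the localization, and clearing the denominator~$\Delta^N$ produces the stated representation. The main obstacle is twofold: the elimination that produces~$H$ is a sizeable computer-algebra computation whose output must still be symmetrized over permutations of the $p_k$ and the $w_j$ and verified to be irreducible, and the factoriality of~$W^\circ$ on which completeness rests requires an explicit geometric description of~$W$ that is not automatic from the abstract dimension count.
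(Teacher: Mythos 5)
Your construction of $H$ and your handling of parts~(1)--(2) are essentially sound and close in spirit to the paper: the paper likewise fixes a slice of the $\Aff{2}{\C}$-action (normalizing three \emph{finite} singularities to $(0,0)$, $(1,0)$, $(0,1)$ rather than the three points at infinity), writes $t_k,d_k$ and $\Lambda$ as explicit rational functions of the six remaining coefficients, and eliminates by a Gr\"obner-basis computation; independence is obtained there by a coordinate change making $\Lambda=\sigma_3$ appear only in the new relation, but your ``exhibit a point of $W$ where $H\neq0$'' variant is acceptable.

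The genuine gap is in part~(3). You reduce completeness to the claim that the height-one prime of $\Sigma\cap W^\circ$ in $\mathcal{O}(W^\circ)$ is principal, and you propose to deduce this from factoriality of $\mathcal{O}(W^\circ)$, itself to be verified by parametrizing $W^\circ$ rationally ``by solving \eqref{eq:EJ1}--\eqref{eq:CS} one variable at a time.'' This fails twice. First, rationality of a variety does not imply that its coordinate ring is a UFD, so even a successful parametrization up to birational equivalence would not give you principality. Second, the parametrization itself does not exist: \eqref{eq:EJ1}, \eqref{eq:EJ2}, \eqref{eq:CS} are indeed linear in $d_4$, $t_4$, $\lambda_3$ respectively, but after substituting $\lambda_3=1-\lambda_1-\lambda_2$ the Baum--Bott relation becomes $\lambda_1^{-1}+\lambda_2^{-1}+(1-\lambda_1-\lambda_2)^{-1}=9-\sum_k t_k^2/d_k$, i.e.\ a plane \emph{cubic} in $(\lambda_1,\lambda_2)$ over each point of the $(t;d)$-base; it is linear in no remaining variable, there is no reason for these fibers to be rational curves, and hence no reason for $W^\circ$ to be rational, let alone factorial. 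The paper avoids this entirely: for an arbitrary $F\in\C[t;d;\lambda]$ vanishing on the spectra it first uses a connectedness/monodromy argument in $\V_2$ (a path permuting the enumeration of the singularities at infinity) to reduce to $F$ symmetric in the $\lambda_j$; rewritten in the elementary symmetric functions, \eqref{eq:CS} and \eqref{eq:BB} become \emph{linear} in $\sigma_1$ and $\sigma_2$, so all four classical relations can be solved rationally and $F$ descends, after clearing the denominators $d_k,\lambda_j$, to a polynomial in the seven variables $(t_1,\dots,d_3,\Lambda)$. There the closure of the image of the moduli space is an irreducible hypersurface in the affine space $\C^7$, its vanishing ideal is automatically principal, and the elimination identifies the generator as the irreducible $H$. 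To close part~(3) you must either prove factoriality of $\mathcal{O}(W^\circ)$ directly (doubtful) or supply this symmetrization-and-reduction step.
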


The polynomial~$H$ in~\autoref{thm:hidden-no-details} is~not uniquely defined.
To get a~uniquely defined polynomial (up~to rescaling), we can eliminate $t_4$ and $d_4$ using \eqref{eq:EJ1} and \eqref{eq:EJ2}.
Let~$\mathcal S$ be the subring of~$\mathcal R$ consisting of~polynomials not depending on~$t_4$, $d_4$ or $\Lambda$.

\begin{theorem}%
    \label{thm:hidden-relation-deg14}
    There exist polynomials~$H_0, H_1, H_2\in\mathcal S$ homogeneous of~weighted degree~$14$ such that the polynomial~$H=H_2\Lambda^2+H_1\Lambda+H_0$ is irreducible in $\mathcal R$, and satisfies the assertions of~\autoref{thm:hidden-no-details}.
    The polynomial~$H$ is~uniquely defined up~to rescaling by a non-zero complex number.
\end{theorem}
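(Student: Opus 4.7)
The plan is to start from any polynomial $H^\ast$ supplied by \autoref{thm:hidden-no-details} and normalize it through three successive operations: elimination of $t_4, d_4$ using \eqref{eq:EJ1}--\eqref{eq:EJ2}, decomposition by the natural $\C^\ast$-action on vector fields, and identification of the result with the minimal polynomial of $\Lambda$ over the function field of~$\mathcal S$.

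The identities \eqref{eq:EJ1} and \eqref{eq:EJ2} let us solve for $t_4$ and $d_4$ as rational functions of $(t_1, t_2, t_3, d_1, d_2, d_3)$; the only denominators that appear are the $d_i$'s and $\sum_{i<j} d_i d_j$, both of which are non-zero on the open locus featuring in assertion~\ref{item:H-full} of \autoref{thm:hidden-no-details} (note $\sum_{i<j}d_i d_j = -d_1 d_2 d_3 / d_4$ on the image). Substituting these into $H^\ast$ and clearing denominators yields $\tilde H\in\mathcal S[\Lambda]$ still satisfying assertions \ref{item:H-relation}--\ref{item:H-full}. Rescaling $v\mapsto c v$ preserves the foliation~$\F_v$, hence $\Lambda$, while sending $(t_k, d_k)\mapsto (c t_k, c^2 d_k)$. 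Therefore every weighted-homogeneous component of~$\tilde H$ with respect to the grading of~$\mathcal S$ again vanishes on extended spectra, and by assertion~\ref{item:H-full} each non-trivial component is a polynomial multiple (in~$\mathcal R$) of a single weighted-homogeneous $H\in\mathcal S[\Lambda]$.

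To establish degree~$2$ in~$\Lambda$, write $H = \sum_k H_k \Lambda^k$ and consider the generically finite dominant rational map between two $6$-dimensional varieties $\pi\colon \V_2\sslash\Aff{2}{\C} \to \{(t, d)\in\C^8 : \eqref{eq:EJ1}, \eqref{eq:EJ2}\}$. Because \eqref{eq:CS} fixes $\lambda_1 + \lambda_2 + \lambda_3 = 1$ and \eqref{eq:BB} expresses $\lambda_1\lambda_2 + \lambda_1\lambda_3 + \lambda_2\lambda_3$ as an explicit function of the finite data and~$\Lambda$, the value of~$\Lambda$ recovers the whole unordered triple $\{\lambda_1, \lambda_2, \lambda_3\}$ from a generic finite spectrum; consequently the generic degree of~$\pi$ equals $\deg_\Lambda H$. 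A direct computation---exhibiting two affine-inequivalent vector fields with the same ordered finite data but distinct~$\Lambda$---shows this degree is exactly~$2$. Hence $H$ is the minimal polynomial of~$\Lambda$ over $\mathrm{Frac}(\mathcal S)$: it is irreducible in~$\mathcal R$, and unique up to a scalar in $\mathrm{Frac}(\mathcal S)^\ast$. Weighted homogeneity then forces the ambiguity down to a constant in~$\C^\ast$, and the weighted degree~$14$ is read off any explicit evaluation of~$H$ on a single one-parameter family in~$\V_2$.

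The main obstacle is the generic-degree-$2$ claim for~$\pi$: showing it is~$\leq 2$ requires an explicit parametrization of~$\V_2$ in which the projection to finite spectra is manifestly at most quadratic, while showing it is~$\geq 2$ requires the construction of two non-equivalent foliations sharing the same ordered finite spectrum. This is also the step in which the weighted degree~$14$ gets pinned down, since both halves of the argument produce~$H$ as a concrete polynomial whose coefficients one can inspect directly.
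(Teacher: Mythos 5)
Your route is genuinely different from the paper's. The paper constructs $H$ head-on: it puts $v$ in the normal form of \autoref{lemma:normalizeSingularities}, writes $t_k,d_k$ ($k=1,2,3$) and $\Lambda$ (via the resultant formula of \autoref{lemma:Lambda-resultants}) as explicit polynomials/rational functions of the coefficients $a_0,\dots,a_5$, and eliminates the $a_j$ by a Gr\"obner-basis computation; the elimination ideal turns out to be principal, and the irreducibility, the degree $2$ in $\Lambda$, and the weighted degree $14$ are all read off from (and verified on) that explicit output in \texttt{CoCoA}/\texttt{Macaulay2}. You instead derive the structural claims conceptually, essentially promoting the heuristic of \autoref{sec:predicting} to a proof: $\Lambda(v)$ and $\Lambda(\tau(v))$ are the two roots of a monic quadratic over the function field of the finite spectra (so $\deg_\Lambda H\le 2$), twins share finite spectra but must have distinct $\Lambda$ (so $\deg_\Lambda H\ge 2$), hence $H$ is the primitive integral form of the minimal polynomial of $\Lambda$, which yields irreducibility and uniqueness; weighted homogeneity comes from the $\C^*$-action $v\mapsto cv$. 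This is legitimate and arguably more illuminating, provided you actually invoke \autoref{thm:twins} and \autoref{thm:moduliSpectra} --- which supply exactly the two halves of your degree count --- rather than listing them as open obstacles; both are available earlier in the paper.

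The genuine gap is the weighted degree $14$. Your proposal to ``read it off any explicit evaluation of $H$ on a single one-parameter family in $\V_2$'' cannot work: $H$ vanishes identically on the spectra of vector fields in $\V_2$, so every such evaluation returns $0$ and carries no degree information. Homogeneity tells you $H_0,H_1,H_2$ share a common weighted degree, but pinning it to $14$ requires producing $H$ explicitly --- either by the paper's elimination or by computing $R_1=\Lambda(v)+\Lambda(\tau(v))$ and $R_2=\Lambda(v)\Lambda(\tau(v))$ as rational functions of $(t_1,\dots,d_3)$ and clearing denominators --- and no shortcut through a single curve replaces that computation. Two smaller slips: uniqueness up to $\C^*$ does not follow from weighted homogeneity alone (one could still multiply by a homogeneous element of $\operatorname{Frac}(\mathcal S)^\ast$ of degree $0$); it follows from primitivity of $H$ in $\mathcal S[\Lambda]$, equivalently irreducibility in $\mathcal R$, via Gauss's lemma. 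And assertion~\ref{item:H-full} of \autoref{thm:hidden-no-details}, which you import wholesale to argue that the vanishing ideal is principal, is itself only established in the paper by the same elimination computation, so your argument does not actually dispense with it.
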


The explicit expression for~$H$ (available at \cite{GitHubRepo}) was obtained using a~computer algebra system, see \autoref{sub:hiddenRelation} for details.
Unfortunately, the polynomial~$H$ has a very long expression: it consists of~$996$~monomials.

Another natural goal is~to~find a~polynomial~$H$ which is \emph{diagonal symmetric}, i.e.\ invariant under applying the same permutation to $t_k$'s and $d_k$'s.
Geometrically, such permutation corresponds to~reenumeration of~the singular points~$p_k$.
\begin{theorem}%
    \label{thm:hidden-relation-deg10}
    There exist diagonal symmetric polynomials~$\tilde H_0, \tilde H_1, \tilde H_2\in\mathcal S$ homogeneous of~weighted degree~$10$ such that for~a~generic quadratic vector field we~have
    \begin{equation}
        \label{eq:tH-ideal}
        (d_1d_2d_3)^2\tilde H_k=d_4^4H_k,
    \end{equation}
    where $H_k$ are the polynomials from \autoref{thm:hidden-relation-deg14}.
    The polynomial~$\tilde H=\tilde H_2\Lambda^2+\tilde H_1\Lambda+\tilde H_0$ satisfies the assertions of~\autoref{thm:hidden-no-details}.
\end{theorem}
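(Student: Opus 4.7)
The plan is to produce $\tilde H$ from the polynomial $H$ of \autoref{thm:hidden-relation-deg14} by a rational manipulation whose polynomiality and diagonal symmetry are then verified by computer algebra. The equation $(d_1d_2d_3)^2 \tilde H_k = d_4^4 H_k$ is asserted only for generic vector fields, i.e.\ on the variety of spectra, so both sides may be reduced modulo the ideal $I$ of all algebraic relations among the extended spectra. Using \eqref{eq:EJ1} one has $d_4\cdot D = -d_1 d_2 d_3$, where $D = d_1 d_2 + d_1 d_3 + d_2 d_3$, whence $d_4^4 = (d_1 d_2 d_3)^4 / D^4$ on this variety; note that $D\neq 0$ there, since otherwise $d_1d_2d_3=0$, contradicting the non-vanishing of the $d_k$. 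Dividing the target equation by $(d_1 d_2 d_3)^2$ then prescribes
\[
\tilde H_k \;\equiv\; \frac{(d_1 d_2 d_3)^2}{D^4}\, H_k \pmod{I}.
\]
Weighted-degree bookkeeping, $12 - 16 + 14 = 10$, matches the claimed degree. This defines $\tilde H_k$ modulo $I$ as a regular function on the variety of spectra; the task is then to realise it as an honest polynomial in $\mathcal S$ and to check that it is diagonally $S_3$-symmetric.

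Polynomiality amounts to computing a remainder under Gr\"obner-basis division of $(d_1 d_2 d_3)^2 H_k$ by $D^4$, using the generators of $I$ given by \eqref{eq:EJ2}, \eqref{eq:BB}, \eqref{eq:CS} and $H$ itself, and observing that the denominator clears. Diagonal $S_3$-symmetry is expected conceptually, since $D^4$ and $(d_1 d_2 d_3)^2$ are already $S_3$-symmetric, while the asymmetry of $H_k$ in the three remaining indices only reflects the choice of which fourth index was eliminated via \eqref{eq:EJ1}--\eqref{eq:EJ2} in the proof of \autoref{thm:hidden-relation-deg14}; this choice is absorbed modulo $I$ once one multiplies by the correct symmetric factor. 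In practice both polynomiality and symmetry are checked directly on the computed output.

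Finally, the three conclusions of \autoref{thm:hidden-no-details} for $\tilde H = \tilde H_2 \Lambda^2 + \tilde H_1 \Lambda + \tilde H_0$ follow from those already established for $H$. The vanishing of $\tilde H$ on extended spectra is immediate from the identity $(d_1 d_2 d_3)^2 \tilde H = d_4^4 H$ together with the non-vanishing of $d_1, \dotsc, d_4$ on generic spectra; independence from the classical index theorems and completeness of the augmented ideal are inherited by translating any hypothetical polynomial relation between $\tilde H$ and the classical index theorems into one between $H$ and those same theorems through the same identity. The main obstacle is the computer-algebra step: since $H_k$ contains on the order of hundreds of monomials, producing $\tilde H_k$ explicitly and certifying its symmetry is a nontrivial symbolic computation, documented in \cite{GitHubRepo}.
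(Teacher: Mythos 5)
Your proposal follows essentially the same route as the paper: use \eqref{eq:EJ1} to trade $d_4^4$ for $(d_1d_2d_3)^4/D^4$ (equivalently, the paper checks the ideal membership $d_4^4H\in\langle \mathrm{num}\,\eqref{eq:EJ1},\ \mathrm{num}\,\eqref{eq:EJ2},\ (d_1d_2d_3)^2\rangle$ and extracts $\tilde H$ as the cofactor of $(d_1d_2d_3)^2$), verify by computer algebra that the denominator clears, and transfer the three assertions of \autoref{thm:hidden-no-details} from $H$ to $\tilde H$ using the nonvanishing of the $d_k$. That last transfer and the degree bookkeeping are fine.

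One point in your write-up needs correcting. ``Diagonal symmetric'' here means invariance under $S_4$ acting simultaneously on $(t_1,\dots,t_4)$ and $(d_1,\dots,d_4)$ --- reenumeration of all \emph{four} finite singular points, as made explicit by the $S_4$-symmetrization formula in \autoref{sec:explicit-formulas} --- not $S_3$-symmetry in the indices $1,2,3$. Your conceptual justification only addresses the $S_3$ part, and even there the reasoning is incomplete: the prefactor $(d_1d_2d_3)^2/D^4$ is $S_3$-symmetric but this does not by itself make the product symmetric, and it is certainly not $S_4$-symmetric, while $H_k$ does not involve $t_4,d_4$ at all. The genuinely nontrivial content of \autoref{thm:hidden-relation-deg10} is that multiplying by $d_4^4(d_1d_2d_3)^{-2}$ restores the \emph{full} $S_4$ symmetry on the Euler--Jacobi submanifold; the paper also only establishes this computationally, after guessing the correcting factor from the shape of $H_0$, so deferring to the machine is acceptable --- but the symmetry you propose to check is the wrong (weaker) one. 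Two smaller remarks: the ideal relevant to clearing the denominator is just the one generated by the numerators of \eqref{eq:EJ1} and \eqref{eq:EJ2} (the relations \eqref{eq:BB}, \eqref{eq:CS} and $H$ itself involve $\Lambda$ or $\lambda$ and contribute nothing to the identity \eqref{eq:tH-ideal} for each $k$ separately, and including $H$ would in any case be beside the point); and since $\tilde H$ is only determined modulo that ideal, it is not unique, which the paper notes explicitly.
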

The polynomial~$\tilde H$ \emph{is not} uniquely defined.
We provide an~explicit formula for one of~such polynomials in~\autoref{sec:explicit-formulas}.
\subsection{Lack of new index theorems}
Despite the length of the formula for~$H$, we have used its explicit expression to show that~\eqref{eq:hiddenRelation} does not come from an index theorem.
Moreover, we show that any possible “index-theorem-like identity” can be deduced from the four classical index theorems, hence concluding the lack of existence of new index theorems that constrain the extended spectra of quadratic vector fields.

The lack of existence of new index theorems is discussed in \autoref{sec:indexTheory}, but it follows from the next theorem.

\begin{theorem}
    \label{thm:noIndexTheorem}
    There exists no~pair~${(R,r)}$ consisting of~a~rational function~$R$ on~$\C^8$ and a~symmetric rational function~$r$ on~$\C^3$ with the property that every quadratic vector field with non-degenerate singularities satisfies the relation
    \[
        R(t;d)=r(\lambda),
    \]
    except for those that can be derived from the previously known relations \eqref{eq:EJ1}--\eqref{eq:CS}.
\end{theorem}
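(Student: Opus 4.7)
The plan is to deduce \autoref{thm:noIndexTheorem} from the classification of relations (\autoref{thm:hidden-no-details}) by identifying a would-be new separable identity with a non-trivial element of a specific field intersection, and then resolving that intersection by an algebraic-independence statement verified directly on the explicit formula for~$H$.

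By \autoref{thm:hidden-no-details} every rational identity on the extended spectra follows from \eqref{eq:EJ1}--\eqref{eq:CS} together with $H=0$, so a new separable identity would have to use~$H$ in an essential way. To analyze this, pass to the ``symmetric-in-$\lambda$'' quotient: writing $\Lambda=\lambda_1\lambda_2\lambda_3$, a short computation with the local form \eqref{eq:near-infty-apart} at each infinite singularity turns \eqref{eq:BB} into the separable relation $\sigma_2(\lambda)=\Lambda\cdot F(t,d)$, where $F:=9-\sum_k t_k^2/d_k$. Combined with \eqref{eq:CS} ($\sigma_1=1$), the classical identities reduce the symmetric coordinate ring to $A[\Lambda]$, with $A:=\C[t,d]/(\mathrm{EJ1},\mathrm{EJ2})$ and $\Lambda$ a free generator; adding $H=0$ gives the true coordinate ring $A[\Lambda]/(H)$. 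Inside its function field $K$ we distinguish $K_1:=\operatorname{Frac}(A)$ and $K_2:=\C(F,\Lambda)$, the image of the field of symmetric rational functions. A separable identity then corresponds to an element of $K_1\cap K_2$; since $F\in K_1\cap K_2$ already accounts for \eqref{eq:BB}, \autoref{thm:noIndexTheorem} is equivalent to the equality
\[
    K_1\cap K_2 \;=\; \C(F).
\]

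To establish this equality, use the fact that by \autoref{thm:hidden-relation-deg14} the polynomial $H=H_2\Lambda^2+H_1\Lambda+H_0$ is irreducible of $\Lambda$-degree~$2$, so $K=K_1(\Lambda)$ is a quadratic extension of~$K_1$ with Galois involution $\Lambda\mapsto\Lambda'=-\Lambda-H_1/H_2$. Write an arbitrary $g\in K_2$ as $g=p(F,\Lambda)/q(F,\Lambda)$ with $p,q\in\C[F,\Lambda]$ and reduce modulo~$H$ in the $K_1$-basis $\{1,\Lambda\}$: putting $\Lambda^k\equiv a_k+b_k\Lambda\pmod H$ with $(a_0,b_0)=(1,0)$ and the recursion $a_{k+1}=-vb_k$, $b_{k+1}=a_k-ub_k$ for $u:=H_1/H_2$ and $v:=H_0/H_2$, one obtains $p\equiv P_0+P_1\Lambda$ and $q\equiv Q_0+Q_1\Lambda$ in~$K$, with $P_i,Q_i\in K_1$ given as $\C(F)$-linear combinations of the $a_j,b_j$. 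Rationalizing $g$ by the conjugate, its $\Lambda$-component is proportional to $P_1Q_0-P_0Q_1$, so $g\in K_1$ iff $P_1Q_0=P_0Q_1$ in~$K_1$. Because the pairs $(a_j,b_j)\in\C[u,v]^2$ have strictly increasing weighted $(u,v)$-degree they are $\C$-linearly independent; under the assumption that $F,u,v$ are algebraically independent over~$\C$ in $K_1$, they remain $\C(F)$-linearly independent in $K_1^2$, and elementary manipulations in $\C(F)[u,v]$ then show that $P_1Q_0=P_0Q_1$ forces $p,q$ to be proportional in $\C(F)$, whence $g\in\C(F)$.

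The main obstacle is to verify the algebraic independence of the triple $(F,u,v)$ in~$K_1$ for the actual polynomial~$H$ recorded in \cite{GitHubRepo}. Since $\operatorname{trdeg}_\C K_1=6$, the condition is simply the non-vanishing of a single $3\times 3$ Jacobian minor of $(F,H_1/H_2,H_0/H_2)$ with respect to three coordinates $t_k,d_k$, evaluated at any one smooth point of $X_1:=\{\mathrm{EJ1}=\mathrm{EJ2}=0\}\subset\C^8$. Although the length of~$H$ (996 monomials) makes this minor a large rational expression, its non-vanishing at a conveniently chosen point is a routine computer-algebra check; this concrete verification, carried out in \autoref{sec:indexTheory}, combines with the reductions above to yield \autoref{thm:noIndexTheorem}.
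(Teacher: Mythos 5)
Your proposal is correct and follows essentially the same route as the paper: after symmetrizing $r$ and eliminating $\sigma_1$ and $\sigma_2/\sigma_3$ via \eqref{eq:CS} and \eqref{eq:BB}, everything reduces to the algebraic independence of $\bigl(9-\sum_k t_k^2/d_k,\ H_1/H_2,\ H_0/H_2\bigr)$ on the Euler--Jacobi locus, verified by the same rank-$3$ Jacobian computation at a single point using the explicit formula for~$H$. The paper packages this key fact as Zariski density in $\C^3$ of the triples $(\sigma_2/\sigma_3,\Lambda,\Lambda')$ coming from twin pairs (\autoref{lemma:r-s2-s3}) rather than as the field intersection $K_1\cap K_2=\C(F)$ in the quadratic extension, but the two formulations are equivalent.
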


\begin{remark}
    The question of finding the hidden relations on the extended spectra was inspired by a very similar question on the hidden relations between the spectra of the derivatives at the fixed points of a rational endomorphism $f\colon\P^n\to\P^n$ posed by Adolfo Guillot in \cite{Guillot2004}.
    In fact, the case of vector fields may be understood as a sub-case of Guillot's problem \cite{WoodsHole}.
\end{remark}

\section{The classical index theorems}
\label{sec:index-theorems}

\subsection{The Euler-Jacobi relations}

Let us recall, in the particular case relevant to us, a classical result known as the Euler-Jacobi formula \cite[Chpt.~5, Sec.~2]{GriffithsHarris1994}.

\begin{theorem}
    \label{thm:EJformula}
    If $P,Q$ are polynomials in $\C[x,y]$ of degree $n$ whose divisors intersect transversely at $n^2$ different points $p_1,\ldots,p_{n^2}\in\C^2$ and $g(x,y)$ is a polynomial of degree at most $2n-3$, then
    \begin{equation}
        \label{eq:EJ-general}
        \sum_{k=1}^{n^2} \frac{g(p_k)}{\mathbf{J}(p_k)}=0,
    \end{equation}
    where $\mathbf{J}(x,y)$ is the Jacobian determinant $\mathbf{J}(x,y)=\det\displaystyle\frac{\partial(P,Q)}{\partial(x,y)}$.
\end{theorem}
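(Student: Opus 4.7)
The plan is to prove the formula via the global residue theorem applied to a single meromorphic $2$-form on $\P^2$. I would first introduce on the affine plane the form
\[
  \omega = \frac{g(x,y)\, dx \wedge dy}{P(x,y)\, Q(x,y)},
\]
whose polar divisor in $\C^2$ is contained in $\{PQ=0\}$. The hypothesis that $\{P=0\}$ and $\{Q=0\}$ intersect transversely at each $p_k$ means that $(P,Q)$ is a local coordinate system near $p_k$; a direct change of variables (equivalently, the defining property of the Grothendieck residue of a complete intersection) then gives
\[
  \Res_{p_k}\omega = \frac{g(p_k)}{\mathbf{J}(p_k)}.
\]
Hence the left-hand side of \eqref{eq:EJ-general} is precisely the sum of residues of $\omega$ at its isolated poles in $\C^2$.

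Next I would extend $\omega$ to a meromorphic form on $\P^2$ and analyse its behaviour along the line at infinity $\mathcal L=\{Z=0\}$. Writing $x=1/u$, $y=v/u$ and denoting by $\tilde P$, $\tilde Q$, $\tilde g$ the homogenisations of $P$, $Q$, $g$, a short computation gives
\[
  \omega = -u^{\,2n-m-3}\,\frac{\tilde g(1,v,u)}{\tilde P(1,v,u)\,\tilde Q(1,v,u)}\,du \wedge dv,
\]
where $m=\deg g$. The assumption $m\le 2n-3$ makes the exponent $2n-m-3$ non-negative, so $\omega$ has no extra pole along $\mathcal L$. Together with the fact that, by transversality and Bezout's theorem, the $n^2$ intersection points of $\{\tilde P=0\}$ and $\{\tilde Q=0\}$ in $\P^2$ all lie in $\C^2$ (so the two curves do not meet on $\mathcal L$), this shows that the isolated poles of $\omega$ on $\P^2$ are exactly $p_1,\dots,p_{n^2}$.

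To conclude, I would invoke the global residue theorem for meromorphic $2$-forms on a smooth projective surface: the sum of the Grothendieck residues at all isolated poles vanishes. Combined with the local computation above, this yields \eqref{eq:EJ-general}. The delicate point of the argument is the degree bookkeeping at infinity: three units of pole come from $dx\wedge dy$, and $n$ from each of $P$ and $Q$, offset only by the $m$ from the numerator, so $m\le 2n-3$ is precisely the threshold below which no additional pole appears along $\mathcal L$. This sharpness is also the reason the hypothesis $\deg g\le 2n-3$ cannot be weakened without introducing extra boundary contributions that would spoil the identity.
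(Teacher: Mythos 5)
Your argument is correct and is exactly the route the paper takes: it cites Griffiths--Harris for this statement and notes in the remark following Corollary~\ref{coro:EJrelations} that the relations are the residue theorem applied to the very $2$-forms $g\,dx\wedge dy/(PQ)$ you construct. Your local residue computation, the degree bookkeeping at infinity, and the use of Bezout to exclude intersection points on the line at infinity fill in the standard details of that reference faithfully.
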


Consider a polynomial vector field $v=P\frac{\partial}{\partial x}+Q\frac{\partial}{\partial y}$ of degree $n\geq2$.
Substituting $g(x,y)=1$ or $g(x,y)=\tr{Dv(x,y)}$ into \eqref{eq:EJ-general}, we obtain \eqref{eq:EJ1} or \eqref{eq:EJ2}, respectively.

\begin{corollary}
    \label{coro:EJrelations}
    A quadratic vector field $v$ having four non-degenerate singularities $p_1,\ldots,p_4\in\C^2$ satisfies \eqref{eq:EJ1} and \eqref{eq:EJ2}.
\end{corollary}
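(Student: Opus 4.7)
The plan is to derive both identities as immediate applications of the Euler-Jacobi formula \autoref{thm:EJformula} to the component polynomials of $v$, with $n=2$ and two carefully chosen auxiliary polynomials $g$.

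First I would verify that the hypotheses of \autoref{thm:EJformula} hold. Since $v$ is quadratic, the components $P$ and $Q$ have degree $2$, and their common zeros in $\C^2$ are precisely the four affine singularities $p_1,\dots,p_4$ of $v$, which by hypothesis are distinct. The Jacobian $\mathbf{J}$ appearing in \autoref{thm:EJformula} coincides with $\det Dv$, so non-degeneracy of each $p_k$, i.e.\ $\det Dv(p_k)\neq 0$, is exactly the condition that the curves $\{P=0\}$ and $\{Q=0\}$ meet transversely at $p_k$. The hypotheses of \autoref{thm:EJformula} then hold with $n=2$, and the admissible bound on the auxiliary polynomial becomes $\deg g\le 2n-3=1$.

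With the hypotheses in hand, \eqref{eq:EJ1} follows from \eqref{eq:EJ-general} by taking $g\equiv 1$, whose degree $0$ is within the allowed range. For \eqref{eq:EJ2} I would take $g(x,y)=\tr Dv(x,y)=P'_x(x,y)+Q'_y(x,y)$, a polynomial of degree at most $n-1=1$, again admissible. Substituting these two choices into \eqref{eq:EJ-general} yields exactly \eqref{eq:EJ1} and \eqref{eq:EJ2}, respectively. The only genuinely non-trivial ingredient here is \autoref{thm:EJformula} itself; given that result, the corollary is a direct specialization, and I do not anticipate any real obstacle beyond the one-line transversality check.
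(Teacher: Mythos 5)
Your argument is correct and is exactly the paper's: apply \autoref{thm:EJformula} with $n=2$, taking $g\equiv 1$ for \eqref{eq:EJ1} and $g=\tr Dv$ (of degree $\le 1=2n-3$) for \eqref{eq:EJ2}, with non-degeneracy of the $p_k$ supplying the transversality hypothesis. Nothing further is needed.
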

We call these equations the \emph{Euler-Jacobi relations on spectra}.

\begin{remark}
    In the end, the relations \eqref{eq:EJ1} and \eqref{eq:EJ2} come form the \emph{residue theorem} \cite[Chpt.~5, Sec.~1]{GriffithsHarris1994}: the local indices are nothing more than the residues of the rational 2-forms
    \[
        \frac{dx\wedge dy}{PQ}, \quad \text{and} \quad \frac{(\tr Dv)\,dx\wedge dy}{PQ}
    \]
    at the points $p_k$.
    The residue theorem then implies the total sum is zero.
\end{remark}
\subsection{The Baum-Bott theorem}

The Euler-Jacobi indices are well defined for singularities of vector fields, but not for foliations.
One of the most important invariants of an isolated singularity of a planar foliation is the \emph{Baum-Bott index}.
Suppose the germ of a foliation $(\F,p)$ with an isolated singularity is given by a holomorphic 1-form $\omega$.
The Baum-Bott index of $\F$ at $p$ is defined as
\[
    \BBindex(\F,p) = \frac{1}{(2\pi i)^2}\int_{\Gamma} \beta\wedge d\beta,
\]
where $\Gamma$ is the boundary of a small ball centered at $p$, and $\beta$ is~a~smooth $(1,0)$-form that satisfies $d\omega=\beta\wedge\omega$ in a neighborhood of $\Gamma$.
In the particular case where $\F$ is locally given by a non-degenerate vector field $v$, the index can be easily computed as
\[
    \BBindex(\F,p) = \frac{\tr^{\,2} Dv(p)}{\det Dv(p)}.
\]

The Baum-Bott theorem, originally proved in \cite{BaumBott1970} in a more general setting, can be stated in our particular case as follows \cite{Brunella2015}:

\begin{theorem}
    Let $\F$ be a foliation of projective degree $d$ on $\P^2$.
    Then
    \[
        \sum_{p\in\Sing\F} \hspace{-8pt}\BBindex(\F,p) = (d+2)^2.
    \]
\end{theorem}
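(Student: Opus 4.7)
The plan is to derive this as a specialization of the general Baum-Bott formula for singular holomorphic foliations on a compact complex surface: for an isolated-singularity foliation $\F$ on a surface $X$, the sum of local Baum-Bott indices equals $\int_X c_1(N_\F)^2$, where $N_\F$ is the normal bundle of $\F$. Granting this general theorem, whose proof uses Chern-Weil theory applied to a Bott-type connection on $N_\F$ together with the fact that the local form $\beta\wedge d\beta$ represents a characteristic class modulo exact forms, the problem on $\P^2$ reduces to identifying $N_\F$ and computing its self-intersection.

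First I would record the intrinsic description of a degree $d$ foliation on $\P^2$: such a foliation is determined by a twisted $1$-form $\omega\in H^0(\P^2,\Omega^1_{\P^2}(d+2))$ with isolated zero locus, or equivalently by a twisted vector field in $H^0(\P^2,T\P^2(d-1))$. The twist $d+2$ (respectively $d-1$) is fixed by the requirement that $d$ equal the number of tangencies of $\F$ with a generic line, and it is precisely what makes the conormal line bundle of $\omega$ equal to $N_\F^{\,*} = \mathcal{O}(-d-2)$.

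Next I would extract the Chern class of $N_\F$ from the short exact sequence
\[
    0 \to T_\F \to T\P^2 \to N_\F\otimes\mathcal{I}_{\Sing\F} \to 0,
\]
which gives $c_1(N_\F) = c_1(T\P^2) - c_1(T_\F) = 3H - (1-d)H = (d+2)H$, where $H$ is the hyperplane class. Inserting this into the general formula yields
\[
    \sum_{p\in\Sing\F}\BBindex(\F,p) = \int_{\P^2}c_1(N_\F)^2 = (d+2)^2\int_{\P^2}H^2 = (d+2)^2.
\]

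The main obstacle is that the substantive work is entirely contained in the general Baum-Bott theorem itself, which I would either invoke as a black box (following \cite{BaumBott1970} or \cite{Brunella2015}) or reprove in the surface case. The nontrivial content is that the residue $\frac{1}{(2\pi i)^2}\int_\Gamma\beta\wedge d\beta$ is independent of the auxiliary $(1,0)$-form $\beta$ chosen to satisfy $d\omega=\beta\wedge\omega$ near $\Gamma$, and that summing these local residues recovers the global characteristic number $c_1(N_\F)^2$; once this is in hand, the $\P^2$-specific calculation above is purely a piece of cohomological linear algebra via the Euler sequence.
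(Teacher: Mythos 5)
Your proposal is correct and follows the same route as the paper, which does not prove this statement but cites it as the classical Baum--Bott theorem specialized to $\P^2$ (via \cite{BaumBott1970} and \cite{Brunella2015}); your reduction to the general formula $\sum_p\BBindex(\F,p)=\int c_1(N_\F)^2$ together with the identification $N_\F=\mathcal{O}(d+2)$ is precisely the standard derivation in the cited reference. The cohomological computation $c_1(N_\F)=3H-(1-d)H=(d+2)H$ is accurate, and you correctly flag that all the analytic content lives in the general theorem you invoke as a black box.
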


\begin{corollary}
    Let $v\in\V_2$ have finite spectra $\{(t_k,d_k)\}$ and characteristic numbers at infinity $\{\lambda_j\}$.
    Then
    \begin{equation}
        \label{eq:BB-deg2}
        \sum_{k=1}^4\frac{t_k^2}{d_k} + \sum_{j=1}^3 \frac{(\lambda_j+1)^2}{\lambda_j} = 16.
    \end{equation}
\end{corollary}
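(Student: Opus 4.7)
The plan is to apply the Baum--Bott theorem directly to the extended foliation $\F_v$ on $\P^2$. Since $v$ has degree $2$, the projective degree of $\F_v$ is $2$, so the right-hand side of the Baum--Bott identity is $(2+2)^2=16$; it remains only to identify the local contribution of each of the seven singular points.

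For the four affine singularities $p_1,\dotsc,p_4$, the foliation is locally defined by the non-degenerate vector field $v$ itself, so the closed-form expression stated just above the theorem gives $\BBindex(\F_v,p_k)=t_k^2/d_k$ with no further work.

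For the three infinite singularities $w_1,w_2,w_3\in\mathcal{L}$, I would pass to the chart $(w,z)=(y/x,\,1/x)$ and read the local defining $1$-form of $\F_v$ from \eqref{eq:near-infty-apart}. Near $w_j$, clearing the denominator $(w-w_j)$ yields
\[
    \omega_j = (w-w_j)\,dz \;-\; \lambda_j z\,dw \;+\; (w-w_j)\,z\cdot(\text{holomorphic})\,dw,
\]
so its linear part at $(w_j,0)$ is $(w-w_j)\,dz - \lambda_j z\,dw$. The associated vector field has linearization with eigenvalues $1$ and $\lambda_j$; non-degeneracy is guaranteed by $\lambda_j\neq 0$, which in turn follows from the assumption that the $7$ singular points of $\F_v$ are distinct. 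Substituting into the trace-squared-over-determinant formula for the Baum--Bott index of a non-degenerate planar singularity gives $\BBindex(\F_v,w_j)=(\lambda_j+1)^2/\lambda_j$.

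Adding the seven contributions and equating to $16$ produces \eqref{eq:BB-deg2}. There is no real obstacle here; the only point that requires a moment's care is verifying that both the tail $O(z^2)$ in \eqref{eq:near-infty-apart} and the contributions at $w_j$ coming from the poles at the other $w_i$ affect only the quadratic and higher-order parts of $\omega_j$, and hence leave the eigenvalues $(1,\lambda_j)$ — and with them the Baum--Bott index — unchanged.
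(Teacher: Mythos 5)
Your proof is correct and follows exactly the route the paper intends: apply Baum--Bott with projective degree $d=2$ to get the total $16$, use $\BBindex=\tr^2 Dv/\det Dv$ at the four non-degenerate affine singularities, and read off the eigenvalues $(1,\lambda_j)$ at each $w_j$ from \eqref{eq:near-infty-apart} to get $(\lambda_j+1)^2/\lambda_j$. The paper treats this as immediate and gives no separate argument, so your write-up simply fills in the same computation.
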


\subsection{The Camacho-Sad theorem}

The Camacho-Sad theorem concerns singularities of a foliation along an invariant curve.
Suppose $C$ is a smooth curve invariant by a foliation $\F$ (see~\cite{Brunella2015} for the general case).
If $p$ is an isolated singularity of $\F$ on $C$, we can choose a local holomorphic 1-form $\omega$ generating $\F$ and a local equation $f$ for $C$ to obtain a decomposition
\[
    \omega = h\,df + f\eta,
\]
where $h$ is a holomorphic function and $\eta$ is a holomorphic 1-form.
In this case, the Camacho-Sad index is defined as follows:
\[
    \CSindex(\F,C,p) = -\frac{1}{2\pi i}\int_{\gamma} \frac{\eta}{h},
\]
where $\gamma\subset C$ is the boundary of a small disk centered at $p$.

\begin{theorem}
    [\cite{CamachoSad1982}]
    Let $\F$ be a foliation on a complex surface $S$ and let $C\subset S$ be a compact $\F$-invariant curve.
    Then
    \[
        \sum_{p\in C\cap\Sing\F}\hspace{-12pt}\CSindex(\F,C,p) = C\cdot C,
    \]
    where $C\cdot C$ denotes the self intersection number of $C$ in $S$.
\end{theorem}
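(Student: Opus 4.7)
The plan is to realize each Camacho--Sad index as the residue of a certain meromorphic connection on the normal bundle $N_{C/S}$, and then to invoke the classical identity that the sum of residues of a meromorphic connection on a compact Riemann surface equals the degree (first Chern number) of the underlying line bundle. Since by definition $\deg N_{C/S}=C\cdot C$, this will yield the desired formula.

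First I would set up the local data. Cover $C$ by open sets $U_\alpha\subset S$, on each of which $C$ is cut out by a holomorphic equation $f_\alpha$ and $\F$ is generated by a holomorphic $1$-form $\omega_\alpha$. On overlaps one has $f_\alpha=g_{\alpha\beta}f_\beta$ and $\omega_\alpha=u_{\alpha\beta}\omega_\beta$ for non-vanishing holomorphic $g_{\alpha\beta},u_{\alpha\beta}$; the cocycle $\{g_{\alpha\beta}\}$ defines the conormal bundle $N^*_{C/S}$, whose dual $N_{C/S}$ has degree $C\cdot C$. Invariance of $C$ under $\F$ yields decompositions $\omega_\alpha=h_\alpha\,df_\alpha+f_\alpha\eta_\alpha$. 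At a point $p\in C\setminus\Sing\F$ the function $h_\alpha$ cannot vanish, since otherwise $\omega_\alpha(p)=f_\alpha(p)\eta_\alpha(p)=0$ would force $p\in\Sing\F$; so $\theta_\alpha:=-\eta_\alpha/h_\alpha$ is a well-defined holomorphic $1$-form on $C\cap U_\alpha$ away from $\Sing\F$, and meromorphic across it.

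The main technical step is to show that the forms $\theta_\alpha$ patch into a meromorphic connection on $N_{C/S}$ with poles exactly at $C\cap\Sing\F$. To check this I would substitute $\omega_\alpha=u_{\alpha\beta}\omega_\beta$ and $f_\alpha=g_{\alpha\beta}f_\beta$ into the two local decompositions, expand, and separate the resulting identity into its ``constant in $f_\beta$'' and ``divisible by $f_\beta$'' parts. The first part yields $u_{\alpha\beta}h_\beta=g_{\alpha\beta}h_\alpha$ on $C$; after restricting the second part to vectors tangent to $C$, where $df_\beta$ vanishes, one extracts the transition law
\[
    \theta_\alpha\big|_C-\theta_\beta\big|_C = \pm\,d\log g_{\alpha\beta}\big|_C,
\]
which is precisely the cocycle condition for a meromorphic connection on $N_{C/S}$. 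This bookkeeping with the multiplicative factors $u_{\alpha\beta}$, $g_{\alpha\beta}$ and with the restriction to $C$ is where I expect the main difficulty to lie; the minus sign in the definition of $\theta_\alpha$ is chosen precisely so that the final sum of residues comes out to $+C\cdot C$ rather than its negative.

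To finish, I would apply the classical residue--degree formula: for any meromorphic connection $\nabla$ on a holomorphic line bundle $L$ over a compact Riemann surface $C$, one has $\sum_p\Res_p\nabla=\deg L$. By construction the residue of $\theta_\alpha$ at $p\in C\cap\Sing\F$ is exactly $\CSindex(\F,C,p)$, and $\deg N_{C/S}=C\cdot C$ by the very definition of the self-intersection number, yielding the theorem.
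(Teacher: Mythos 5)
The paper does not prove this statement: it is the Camacho--Sad index theorem, quoted with a citation to the original article (and to Brunella's book for the general case), so there is no in-paper argument to compare yours against. That said, your sketch is a correct outline of the standard modern proof for a \emph{smooth} compact invariant curve. The bookkeeping you defer does work: writing $f_\alpha=g_{\alpha\beta}f_\beta$ and $\omega_\alpha=u_{\alpha\beta}\omega_\beta$, restriction to points of $C$ gives $u_{\alpha\beta}h_\beta=g_{\alpha\beta}h_\alpha$ there (using $df_\beta\neq0$ along the smooth curve), and after dividing the remaining identity by $f_\beta$ and pulling back to $TC$ one gets
\[
  \theta_\alpha-\theta_\beta=d\log g_{\alpha\beta}\big|_C,\qquad \theta_\alpha:=-\frac{\eta_\alpha}{h_\alpha}\Big|_C,
\]
with $g_{\alpha\beta}=f_\alpha/f_\beta$ the cocycle of $\mathcal O_S(C)$, hence of $N_{C/S}$ after restriction. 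There is in fact no genuine $\pm$ ambiguity left to resolve: with the paper's normalization $\CSindex(\F,C,p)=-\frac{1}{2\pi i}\int_\gamma\eta/h=\Res_p\theta_\alpha$, the residue--degree formula for meromorphic connections gives exactly $+\deg N_{C/S}=C\cdot C$. Two caveats. First, your argument genuinely requires $C$ smooth (the local decomposition, the normal bundle, and the identification $\deg N_{C/S}=C\cdot C$ all use this); the theorem as stated allows an arbitrary compact invariant curve, for which the index itself must be defined differently and the proof is more involved. Since the paper only applies the theorem to the line at infinity, the smooth case suffices here, but you should flag the restriction. Second, you should justify the residue--degree formula you invoke (e.g.\ by comparing your connection with $ds_\alpha/s_\alpha$ for a global meromorphic section $s$ of $N_{C/S}$ and using that a global meromorphic $1$-form on a compact Riemann surface has residue sum zero); as stated it is a black box, though a standard one.
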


Note that \eqref{eq:near-infty-apart} implies that $\CSindex(\F, \mathcal L, w_j)=\lambda_j$, so we have the following corollary.
\begin{corollary}
    \label{cor:sum-lambdas}
    The characteristic numbers at infinity of a vector field $v\in\V_2$ satisfy the relation
    \begin{equation}
        \label{eq:sum-lambdas}
        \lambda_1+\lambda_2+\lambda_3 = 1,
    \end{equation}
    which is clearly equivalent to~\eqref{eq:CS}.
\end{corollary}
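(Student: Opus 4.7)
The plan is to apply the Camacho-Sad theorem with $S = \P^2$ and $C = \mathcal L$, then verify that (a) each Camacho-Sad index equals the corresponding residue $\lambda_j$ appearing in~\eqref{eq:near-infty-apart}, and (b) the self-intersection $\mathcal L \cdot \mathcal L$ equals~$1$.

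First, I would rewrite the foliation defined by~\eqref{eq:near-infty-apart} as the zero set of a holomorphic $1$-form in a neighborhood of each singular point $w_j\in\mathcal L$. Multiplying through by the denominators and clearing, \eqref{eq:near-infty-apart} says that $\F_v$ is locally generated near $w_j$ by a $1$-form of the shape
\[
    \omega_j = (w-w_j)\,dz - z\bigl(\lambda_j + (w-w_j)\cdot g(z,w)\bigr)\,dw + z^2\,\eta_0,
\]
where $g$ is holomorphic and $\eta_0$ is a holomorphic $1$-form. Since $\mathcal L=\{z=0\}$, I take $f=z$ as a local equation for $\mathcal L$. Then writing $\omega_j = h\,dz + z\,\eta$ and using the definition of the Camacho-Sad index, I would compute
\[
    \CSindex(\F_v,\mathcal L, w_j) = -\frac{1}{2\pi i}\int_{\gamma_j} \frac{\eta}{h}\bigg|_{\mathcal L} = -\frac{1}{2\pi i}\int_{\gamma_j}\frac{-\lambda_j\,dw}{w-w_j} = \lambda_j,
\]
where $\gamma_j\subset\mathcal L$ is a small loop around $w_j$. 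This verifies point~(a) and is already noted in the paper just above the statement.

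Second, I would invoke the standard fact that a line in $\P^2$ has self-intersection $+1$; this follows, for instance, from the identification $H^2(\P^2;\Z)\cong\Z$ generated by the class of a line together with Bezout's theorem for two distinct lines meeting transversely at one point. Applying the Camacho-Sad theorem with $C=\mathcal L$ then gives
\[
    \sum_{j=1}^{3}\lambda_j \;=\; \sum_{p\in\mathcal L\cap\Sing\F_v}\CSindex(\F_v,\mathcal L,p) \;=\; \mathcal L\cdot\mathcal L \;=\; 1,
\]
which is the desired identity, and is visibly equivalent to~\eqref{eq:CS}.

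There is essentially no obstacle here: the Camacho-Sad theorem does all the work. The only point that requires a small calculation is checking that the residues $\lambda_j$ in the expansion~\eqref{eq:near-infty-apart} really agree with the Camacho-Sad indices in the sense of the preceding definition, but this is an immediate residue computation along $\mathcal L$ using the explicit normal form.
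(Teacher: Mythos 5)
Your proof is correct and follows essentially the same route as the paper: the corollary is stated there as a direct consequence of the Camacho--Sad theorem applied to $C=\mathcal L$ with $\mathcal L\cdot\mathcal L=1$, together with the observation (made just before the corollary) that \eqref{eq:near-infty-apart} gives $\CSindex(\F_v,\mathcal L,w_j)=\lambda_j$. Your residue computation simply fills in the detail the paper leaves implicit; the paper additionally sketches a second, purely direct verification in \autoref{rem:CS}, which you do not need.
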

This well-known relation may also be verified directly cf.~\autoref{rem:CS}.

\section{Twin vector fields}
\label{sec:twin-vfs}
In~a~previous paper of~the second author, the question of~whether or~not a~generic quadratic vector field (up to affine equivalence) is completely determined by its spectra (finite or extended) was studied.
The answer is that the finite spectra does not determine the vector field completely, there is a finite ambiguity coming from the existence of \emph{twin vector fields}.

\begin{definition}
    \label{def:twins}
    We will say that two vector fields $v$ and $v'$ are \emph{twin vector fields}, if they are not equal yet they have exactly the same singular locus and, for each point $p$ in the common singular set, the matrices $Dv(p)$ and $Dv'(p)$ have the same spectrum.
\end{definition}

\begin{theorem}
    [\cite{TwinVectorFields}]
    \label{thm:twins}
    A generic quadratic vector field has exactly one twin.
    Moreover, if two vector fields from the class $\V_2$ have the same finite spectra (no assumption on the position of the singularities) then, after transforming one of them by a suitable affine map, they are either identical or a~pair of twin vector fields.
\end{theorem}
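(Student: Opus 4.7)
The plan is to analyze the finite-spectra map
\[
  \bar\Phi\colon \V_2/\Aff{2}{\C} \longrightarrow \C^6_{\mathrm{spec}},
\]
where $\C^6_{\mathrm{spec}}\subset\C^8$ is the subvariety cut out by \eqref{eq:EJ1} and \eqref{eq:EJ2}, and show that $\bar\Phi$ is dominant of degree exactly~$2$. This gives both assertions at once: the first says the fiber of $\bar\Phi$ over $\Phi(v)$ consists of two points, and the second says any $v'$ with the same spectra as $v$ becomes, after a suitable affine transformation, an element of that fiber.

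For the dimension count, $\dim(\V_2/\Aff{2}{\C}) = 12 - 6 = 6$ matches $\dim\C^6_{\mathrm{spec}}$, so after verifying dominance at a single well-chosen explicit vector field the generic fiber is a non-empty finite set. For the degree computation, I would normalize by placing three singularities at $p_1=(0,0)$, $p_2=(1,0)$, $p_3=(0,1)$, which consumes the affine group up to the finite reordering accounted for by the cover in~\autoref{def:classesV2A2}. Writing $P,Q$ in a basis of the $2$-dimensional pencil of conics through $p_1,\dotsc,p_4$, the normalized vector field depends on the $6$ parameters $(a,b,\alpha_1,\alpha_2,\beta_1,\beta_2)$, with $p_4=(a,b)$.

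Next, I would observe that the trace equations $t_k=\tr Dv(p_k)$ are linear in $(\alpha_i,\beta_i)$ and, by \eqref{eq:EJ2}, satisfy exactly one linear relation among them, so for each fixed $p_4$ they determine $(\alpha_i,\beta_i)$ up to a single parameter $s\in\C$. Substituting into the four determinant equations and using \eqref{eq:EJ1} reduces the problem to three polynomial equations in the three unknowns $(s,a,b)$. The key computational step is an elimination---most cleanly done in a computer algebra system---showing that the resulting ideal has exactly two solutions for generic spectra. The main obstacle is that a direct B\'ezout bound overshoots; one must either identify the extraneous components geometrically or carry out the elimination until a quadratic factor emerges whose two roots correspond to $v$ and its twin $v'$. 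Conceptually, the cleanest outcome would be an explicit birational involution on the normalized slice that realizes the twin-swap, but absent such a construction the computation is unavoidable.

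For the second assertion, given two vector fields in $\V_2$ with the same finite spectra, one uses the $6$-dimensional affine group to move three singularities of the second vector field to the three standard positions of the first; the matching permutation is supplied by the finite cover in~\autoref{def:classesV2A2}. The two normalized vector fields then lie in the same fiber of $\bar\Phi$, so by the degree count they are either equal or a pair of twins.
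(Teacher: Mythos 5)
The paper itself does not prove this statement---it is imported from \cite{TwinVectorFields}---so there is no internal proof to compare against; I can only assess your plan on its own terms. The overall strategy (normalize three singularities, parametrize the resulting $6$-dimensional slice via the pencil of conics through the four zeros, and show the finite-spectra map is dominant of generic degree $2$) is the right one and is essentially the route of the cited reference. Your claim that the four trace functionals are linear in $(\alpha_i,\beta_i)$ with exactly one relation coming from \eqref{eq:EJ2} is correct and can be made precise: writing $Dv(p_k)=A\cdot DC(p_k)$ with $A=\left(\begin{smallmatrix}\alpha_1&\alpha_2\\ \beta_1&\beta_2\end{smallmatrix}\right)$ and $DC=\partial(C_1,C_2)/\partial(x,y)$ gives $d_k=\det A\cdot J_C(p_k)$, so the universal linear relation $\sum_k t_k/J_C(p_k)=0$ supplied by \autoref{thm:EJformula} is a rescaling of \eqref{eq:EJ2}.

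Two genuine gaps remain. First, the entire content of the theorem is the count ``exactly two points in the generic fiber,'' and you defer precisely this to an unperformed elimination while acknowledging that a naive B\'ezout bound overshoots and that extraneous components must be identified and discarded. As written, nothing in the proposal rules out degree $1$ or degree $\ge 3$; the quadratic factor whose existence you hope ``emerges'' is the theorem. Second, even granting degree $2$, two vector fields in the same fiber of the spectra map a priori share only their \emph{spectra}, whereas \autoref{def:twins} also requires the same \emph{singular locus}; in your coordinates, the two solutions could conceivably have different fourth zeros $(a,b)$. To close this you need the additional observation (quoted in \autoref{subsub:slow}) that \autoref{thm:EJformula} applied to $g=x$ and $g=y$ forces $p_4=\left(-\tfrac{d_4}{d_2},-\tfrac{d_4}{d_3}\right)$ once $p_1,p_2,p_3$ are normalized and $d_4$ is recovered from \eqref{eq:EJ1}, so that the fourth singular point is a function of the spectra alone. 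Without that step, your concluding sentence ``they are either equal or a pair of twins'' does not follow from the degree count.
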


The above theorem implies that given a generic vector field $v\in\V_2$, there exist exactly two disjoint orbits of the action of $\Aff{2}{\C}$ on $\V_2$ consisting of vector fields having the same finite spectra:
the orbit of $v$ and the orbit of its twin.
Note that this result is consistent with the dimensional count done before:
$\V_2\sslash\Aff{2}{\C}$ has dimension~$6$ and the space of~finite spectra, which consists of~$8$~complex numbers constrained by two Euler-Jacobi relations, has dimension~$6$ as~well.

A similar statement about the Baum-Bott index for foliations of~$\CP^2$ of \emph{projective} degree~$2$ was proved by Lins Neto: 
in~\cite{LinsNeto2012} it is proved that the generic fiber of the \emph{Baum-Bott map} consists of exactly $240$ orbits of the natural action of $\operatorname{Aut}(\CP^2)$ on the space of foliations.

\begin{remark}
    \label{rmk:twin-is-rational}
    Given a generic vector field $v\in\V_2$ we can compute its twin $v'$ by solving a simple system of algebraic equations (cf.~\cite{TwinVectorFields}). The coefficients defining $v'$ are expressed as rational functions on the coefficients defining $v$. Thus, we obtain a rational map (which is an involution) $\tau\colon\V_2\dasharrow\V_2$. This map is equivariant with respect to the action of the affine group on $\V_2$, and so descends to the quotient $\V_2\sslash\Aff{2}{\C}$ as a rational involution that we also denote by $\tau$.
\end{remark}

As the next theorem shows, twin vector fields have the same finite spectra but necessarily unequal characteristic numbers at infinity.

\begin{theorem}
    [\cite{TwinVectorFields}]
    \label{thm:moduliSpectra}
    Two generic quadratic vector fields are affine equivalent if and only if their extended spectra of singularities coincide.
\end{theorem}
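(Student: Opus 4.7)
First, the forward implication is~immediate: each component of~the extended spectra---$\tr Dv(p_k)$ and $\det Dv(p_k)$ at~finite singularities, together with each Camacho--Sad index $\lambda_j$ at~infinity---is a~local analytic invariant, and any affine equivalence between $v$ and $v'$ induces a bijection of their singular sets respecting the fixed enumeration of~\autoref{def:classesV2A2}, hence preserves all eleven invariants. The real content is the converse, which I~plan to~derive from~\autoref{thm:twins} combined with a~rigidity statement about the behavior of~$\tau$ on the characteristic numbers at~infinity.

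More concretely, suppose $v, v'\in\V_2$ are generic with identical extended spectra. Their finite spectra agree in~particular, so by~\autoref{thm:twins} we~may assume, after applying a~suitable affine map to~$v'$, that either $v'=v$ or $v'=\tau(v)$. The first case gives affine equivalence directly, so the task reduces to~ruling out the second; equivalently, I~must show that for~generic $v\in\V_2$ the unordered triple $\{\lambda_j\}$ of~characteristic numbers at~infinity of~$\tau(v)$ differs from that of~$v$, contradicting the hypothesis.

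To~address this, the plan is~to~track the single $\Aff{2}{\C}$-invariant rational function $\Lambda(v)=\lambda_1\lambda_2\lambda_3$. Combined with the fixed value $\sigma_1=\lambda_1+\lambda_2+\lambda_3=1$ from~\eqref{eq:sum-lambdas}, and with the value of~$\sigma_2=\lambda_1\lambda_2+\lambda_1\lambda_3+\lambda_2\lambda_3$ recovered from~\eqref{eq:BB-deg2} once the finite spectra and~$\Lambda$ are known, the quantity $\Lambda(v)$ determines the unordered triple~$\{\lambda_j\}$ completely (recall $\Lambda\neq 0$ in $\V_2$). It~thus suffices to~establish that $\Lambda(v)\neq\Lambda(\tau(v))$ generically. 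Since the locus $Z=\{v\in\V_2:\Lambda(v)=\Lambda(\tau(v))\}$ is Zariski closed and, by~\autoref{rmk:twin-is-rational}, $\Aff{2}{\C}$-invariant, it~is either all of~$\V_2$ or a~proper subvariety; the first alternative will~be ruled out by~exhibiting a~single $v_0\in\V_2$ with $\Lambda(v_0)\neq\Lambda(\tau(v_0))$.

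The main obstacle will~be producing such an explicit witness in~a~workable form. I~would proceed by (i)~choosing $v_0$ with~sufficiently generic but~simple coefficients; (ii)~solving the algebraic system of~\cite{TwinVectorFields} to~express~$\tau(v_0)$ rationally in~the coefficients of~$v_0$; (iii)~reading off the residues~$\lambda_j$ for each vector field from the partial-fraction expansion~\eqref{eq:near-infty-apart} and comparing the products. Step~(ii) is the most delicate and is best handled by a~computer algebra system; steps~(i) and~(iii) are routine. Once the inequality $\Lambda(v_0)\neq\Lambda(\tau(v_0))$ is verified in~a~single example, the locus~$Z$ is~forced to~be a~proper subvariety, ruling out $v'=\tau(v)$ for~generic~$v$ and so forcing affine equivalence of~$v$ and~$v'$.
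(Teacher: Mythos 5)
The paper does not actually prove this statement---it is imported wholesale from \cite{TwinVectorFields}---so there is no internal proof to compare against; I can only judge your argument on its own terms. Your reduction is correct and clean: the forward direction is formal, and for the converse, \autoref{thm:twins} leaves only the case $v'=\tau(v)$ to exclude, which (since $\sigma_1(\lambda)=1$ always and equal extended spectra force equal multisets $\{\lambda_j\}$, hence equal $\Lambda$) reduces the whole theorem to the single inequality $\Lambda(v)\neq\Lambda(\tau(v))$ for generic $v$. The Zariski-closedness argument for promoting one witness to genericity is also fine. Incidentally, your invocation of Baum--Bott to recover $\sigma_2$ from $\Lambda$ is not needed for the direction you actually use---equal extended spectra give equal $\Lambda$ immediately---though it is harmless.

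The genuine gap is that the witness is never produced. Every step before it is essentially bookkeeping, so the entire nontrivial content of the theorem is concentrated in the one computation you defer to a computer algebra system; as written, the proof is a correct reduction, not a proof. Two further cautions. First, there is a circularity hazard if you try to shortcut the witness by citing the paper: \autoref{sec:predicting} argues that $\Lambda$ is not a function of the finite spectra \emph{by invoking} \autoref{thm:moduliSpectra}, so that passage cannot be used here. A non-circular route is the independent Gr\"obner-basis computation of \autoref{sub:hiddenRelation}: $H$ is irreducible of degree exactly $2$ in $\Lambda$ with $H_2\not\equiv 0$, hence is not a square, hence $\Lambda(v)$ and $\Lambda(\tau(v))$ are generically distinct roots---but that is itself a substantial machine computation, not lighter than the one you propose. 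Second, in step (iii) you must take care that your chosen $v_0$ lies outside the indeterminacy locus of $\tau$ and that both $v_0$ and $\tau(v_0)$ actually lie in $\V_2$ (invariant line at infinity with three distinct singularities), or the comparison of residues is not defined. Until a concrete $v_0$ is exhibited and the inequality $\Lambda(v_0)\neq\Lambda(\tau(v_0))$ is checked, the argument is incomplete.
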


\section{Predicting the form of the hidden relation}
\label{sec:predicting}

\subsection{Preliminaries}

We seek for an algebraic relation among the 11 numbers in the extended spectra. We shall work only with the following 7 variables: $t_1,t_2,t_3$, $d_1,d_2,d_3$, $\Lambda$ (recall that $\Lambda$ is the product of characteristic  numbers $\lambda_1\lambda_2\lambda_3$), from these we can recover the full extended spectra using the previously known equations \eqref{eq:EJ1}--\eqref{eq:CS}.

A first hope would be to explicitly write $\Lambda$ as a function of $t_1,\ldots,d_3$. However, this would be impossible: if this was the case then the finite spectra would completely determine the extended spectra and by \autoref{thm:moduliSpectra} it would completely determine the vector field (up to affine equivalence). This contradicts \autoref{thm:twins}. In this section we explain how we can achieve the next best thing. Namely, we can implicitly express $\Lambda$ in terms of $t_1,\ldots,d_3$ through a quadratic equation whose two roots correspond to the two values that $\Lambda$ may take (one for each of the two twins that realize the finite spectra $t_1,\ldots,d_3$).

If the coefficients of the above quadratic expression are polynomial, we obtain an equation $H_2\Lambda^2+H_1\Lambda+H_0=0$ as in \autoref{thm:hidden-relation-deg14}.

\subsection{Building the hidden relation}
\label{sub:builiding}
There are three maps that we need to build up the hidden relation, the fact that all three maps are rational is the key fact that allow us to recover a polynomial equation in the end. The maps are the following:

\begin{enumerate}
    \item The rational involution $\tau\colon\V_2\sslash\Aff{2}{\C}\dasharrow\V_2\sslash\Aff{2}{\C}$ in \autoref{rmk:twin-is-rational} that assigns to each vector field its unique twin,
    \item The rational map $\Lambda\colon\V_2\dasharrow\C$ that assigns to each vector field its product of characteristic numbers $\Lambda(v)=\lambda_1\lambda_2\lambda_3$ (see \autoref{sec:product-infty} for a detailed discussion on this map),
    \item The birational isomorphism $\psi\colon\C^6\dasharrow\V_2\sslash\Aff{2}{\C}\sslash\tau$ defined as the birational inverse of the map $[v]\mapsto(t_1,\ldots,d_3)$.
\end{enumerate}

We remark that the map $\V_2\dasharrow\C^6\sslash\Aff{2}{\C}$ which maps $v$ to $(t_1,\ldots,d_3)$ is a dominant rational map whose generic fiber is two points corresponding to a pair of twins (cf.~\cite{TwinVectorFields}), thus once we pass to the quotient by the involution $\tau$ we obtain a birational isomorphism.

We're now ready to construct the hidden relation. The map $\V_2\sslash\Aff{2}{\C}\dasharrow\C$ given by $v\mapsto \Lambda(v)+\Lambda(\tau(v))$ is invariant with respect to $\tau$ and so descends to the quotient $\V_2\sslash\Aff{2}{\C}\sslash\tau$. Precomposing with $\psi$ we obtain a map

\begin{align*}
    R_1\colon\C^6 & \dasharrow \C \\
    (t_1,\ldots,d_3) &\longmapsto \Lambda(v)+\Lambda(\tau(v))
\end{align*}

In a similar way we construct a map $R_2(t_1,\ldots,d_3)=\Lambda(v)\Lambda(\tau(v))$. It follows now that the spectra of a generic quadratic vector field satisfies the equation

\[
    \Lambda^2 - R_1(t;d)\Lambda + R_2(t;d) = 0.
\]

From this discussion we deduce the existence of a new relation, and we see why it ought to be a quadratic equation on $\Lambda$. An algorithm to obtain the hidden relation following the above arguments is described in \autoref{subsub:slow}. However, in order to compute the hidden relation in a more efficient way, we're going to follow a different strategy (\autoref{subsub:fast}).

\section{Computing the hidden relation}
\label{sec:relation}
\subsection{Plan of the proof}\label{sub:plan}
In this section we are going to prove in detail Theorems~\ref{thm:hidden-no-details}--\ref{thm:hidden-relation-deg10}. 
The heuristic idea behind the proof diverges from \autoref{sec:predicting} but is straightforward.
Consider the map $\mathcal{M}\colon\V_2\sslash\Aff{2}{\C}\to\C^7$ that assigns to each vector field $v$ the tuple $(t_1, t_2, t_3, d_1, d_2, d_3, \Lambda)$, where $\Lambda=\lambda_1\lambda_2\lambda_3$.
In terms of \autoref{sec:predicting}, $\mathcal M=(\psi^{-1}, \Lambda)$.
Our goal is to describe the image of $\mathcal M$.
It turns out that $\mathcal M$ is~a~rational map, and we shall find an explicit formula for this map.
Finally, (the closure of) the image of $\mathcal M$ is the projection of the graph of~$\mathcal M$ to the codomain, so we can use standard Gröbner basis algorithms to find the vanishing ideal of this projection.

In \autoref{sec:product-infty} we will provide a useful way to express~$\Lambda$ in terms of the \emph{coefficients} of~$P$ and~$Q$.
Then in~\autoref{sub:normal-form} we provide explicit coordinates on $\V_2\sslash\Aff{2}{\C}$.
The explicit formulas for the finite spectra in these coordinates together with the relation from \autoref{sec:product-infty} give an explicit formula for~$\mathcal M$.
Finally, in~\autoref{sub:hiddenRelation} we will use this formula to~establish a~new relation between the \emph{finite} spectra, and the number~$\Lambda$.

In~\autoref{sub:proof-deg14} we will prove that the new relation satisfies the assertions of \autoref{thm:hidden-relation-deg14}, and in~\autoref{sub:proof-deg10} we will use it to prove \autoref{thm:hidden-relation-deg10}.

\subsection{The product of the characteristic numbers at infinity}
\label{sec:product-infty}
Consider a polynomial vector field $P(x, y)\frac{\partial}{\partial x}+Q(x, y)\frac{\partial}{\partial y}$ of degree $n$.
In the coordinates $z=\frac 1x$, $w=\frac yx$, it takes the form
\[
    \frac{dz}{dw}=z\frac{\tilde P(z, w)}{w\tilde P(z, w) - \tilde Q(z, w)},
\]
where $\tilde P(z, w)=z^nP\left( \frac 1z, \frac wz \right)$, $\tilde Q(z, w)=z^nQ\left( \frac 1z, \frac wz \right)$.
In the generic case $w\tilde P(0, w)-\tilde Q(0, w)$ is a non-zero polynomial of degree $n+1$, therefore
\begin{equation}
    \label{eq:near-infty-sr}
    \frac{dz}{dw}=z\frac{F(w)}{G(w)}+O(z^2),
\end{equation}
where $F(w)=\tilde P(0, w)$ and $G(w)=w\tilde P(0, w)-\tilde Q(0, w)$.
\begin{remark}
    \label{rem:CS}
    Note that the leading coefficients of~$F$ and~$G$ are both equal to the leading coefficient of~$\tilde P(0, w)$, hence $\lim_{w\to\infty}w\frac{F(w)}{G(w)}=1$.
    Comparing this equality and \eqref{eq:near-infty-sr} to \eqref{eq:near-infty-apart}, we get another proof of~\eqref{eq:CS}.
\end{remark}

Comparing \eqref{eq:near-infty-sr} to \eqref{eq:near-infty-apart}, we see that the characteristic numbers $\lambda_k$ are the residues of the rational function $\frac{F(w)}{G(w)}$ at the zeroes $w_k$ of its denominator, hence
\begin{equation}
    \label{eq:formula-lambdas}
    \lambda_k=\frac{F(w_k)}{G'(w_k)}.
\end{equation}

The following lemma allows us to find the product $\Lambda=\prod_{k=1}^{n+1}\lambda_k$ as a rational function of the coefficients of~$P$ and~$Q$.
\begin{lemma}
    \label{lemma:Lambda-resultants}
    Let $\Lambda$, $F$, $G$ be as above.
    Then
    \begin{equation}
        \label{eq:Lambda-resultants}
        \Lambda=\frac{\Res(F, G)}{\Res(G', G)}.
    \end{equation}
    Here and below $\Res(\cdot, \cdot)$ stands for the resultant of its arguments.
\end{lemma}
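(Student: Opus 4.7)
The plan is to combine the explicit formula \eqref{eq:formula-lambdas} for each $\lambda_k$ with the classical product formula for the resultant. Starting from $\Lambda=\prod_{k=1}^{n+1}\lambda_k$ and substituting \eqref{eq:formula-lambdas}, one gets
\[
    \Lambda=\prod_{k=1}^{n+1}\frac{F(w_k)}{G'(w_k)}=\frac{\prod_{k}F(w_k)}{\prod_{k}G'(w_k)},
\]
so the task reduces to rewriting each product as a resultant.

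Next I would invoke the standard identity: if $G(w)=c\prod_{k=1}^{m}(w-w_k)$ and $H$ is any polynomial, then
\[
    \Res(H,G)=c^{\deg H}\prod_{k=1}^{m} H(w_k).
\]
Applying this with $H=F$ gives $\Res(F,G)=c^{\deg F}\prod_{k}F(w_k)$, and with $H=G'$ gives $\Res(G',G)=c^{\deg G'}\prod_{k}G'(w_k)$. Dividing these two expressions yields
\[
    \frac{\Res(F,G)}{\Res(G',G)}=c^{\deg F-\deg G'}\,\prod_{k}\frac{F(w_k)}{G'(w_k)}.
\]

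The only thing left to check is that the power of $c$ disappears, i.e.\ that $\deg F=\deg G'=\deg G-1$. This is immediate from the construction: $F(w)=\tilde P(0,w)$ has degree $n$ (generically), while $G(w)=w\tilde P(0,w)-\tilde Q(0,w)$ has degree $n+1$ (also generically, which is exactly the assumption that the foliation has $n+1$ distinct singular points on the invariant line at infinity). Hence $\deg F-\deg G'=0$, the factor $c^{0}=1$, and one obtains precisely \eqref{eq:Lambda-resultants}.

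The step that deserves a remark, rather than a real obstacle, is the genericity of the setup: the formula uses that $G$ has simple roots $w_1,\dotsc,w_{n+1}$ (so that the Camacho--Sad indices are given by \eqref{eq:formula-lambdas}), and that the leading coefficient $c$ of $G$ is non-zero, both of which hold under the standing generic assumptions on $v$. Under these assumptions the identity is a pure algebraic consequence of the resultant--product formula, and by continuity it extends to the Zariski closure of the generic locus where both sides are defined.
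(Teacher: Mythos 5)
Your proof is correct and follows essentially the same route as the paper: multiply the residue formula \eqref{eq:formula-lambdas} over all roots of $G$, convert each product into a resultant via the standard product formula, and observe that the leading-coefficient factors cancel because $\deg F=\deg G'=n$. The only difference is cosmetic — the paper states both prefactors directly as $\LC(G)^{-n}$, whereas you derive the cancellation from $\deg F-\deg G'=0$.
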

Recall that the resultant of two polynomials is a polynomial of their coefficients, hence the right hand side of~\eqref{eq:Lambda-resultants} is a rational function of the coefficients of~$P$ and~$Q$.
\begin{proof}
    Multiplying \eqref{eq:formula-lambdas} for all $k=1,2,\ldots n+1$ we obtain the formula
    \[
        \Lambda = \frac{\prod_{k=1}^{n+1} F(w_k)}{\prod_{k=1}^{n+1} G'(w_k)}.
    \]
    The numerator in the above expression is the product of $F(w)$ at each of the roots of $G$.
    This product is equal to
    \[
        \prod_{k=1}^{n+1} F(w_k) = \LC(G)^{-n}\Res(F, G),
    \]
    where $\LC(G)$ is the leading coefficient of~$G$.
    Similarly, the denominator equals $\LC(G)^{-n}\Res(G',G)$ and so we obtain~\eqref{eq:Lambda-resultants}.
\end{proof}

\subsection{Normal form}%
\label{sub:normal-form}
Note that a quadratic vector field from the class $\V_2$ cannot have three collinear singularities.
Indeed, otherwise it would vanish on the line passing through these singularities.
Hence we have the following lemma.

\begin{lemma}%
    \label{lemma:normalizeSingularities}
    Every quadratic vector field with four isolated singularities is affine equivalent to a vector field with singularities at $p_1=(0,0)$, $p_2=(1,0)$, $p_3=(0,1)$.
    Any such vector field $v=P\frac{\partial}{\partial x}+Q\frac{\partial}{\partial y}$ is defined by polynomials
    \begin{equation}
        \label{eq:coefficients}
        \begin{aligned}
            P(x,y) &= a_0x^2+a_1xy+a_2y^2-a_0x-a_2y,  \\
            Q(x,y) &= a_3x^2+a_4xy+a_5y^2-a_3x-a_5y,
        \end{aligned}
    \end{equation}
    for some complex numbers $a_0,\ldots,a_5$.
\end{lemma}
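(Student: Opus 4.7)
The plan is to first establish that no three of the four singularities can be collinear, then apply the transitivity of $\Aff{2}{\C}$ on non-collinear triples to normalize, and finally read off the stated expressions for $P$ and $Q$ from the vanishing conditions at the three prescribed points.

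For the non-collinearity, I would argue by contradiction. Suppose three of the singularities lay on a common line $L$, and parametrize $L$ affinely by a single variable $t$. Then the restrictions $P|_L$ and $Q|_L$ are polynomials in $t$ of degree at most $2$, each vanishing at three distinct values of $t$. Since a nonzero polynomial of degree at most $2$ has at most two roots, both restrictions vanish identically on $L$. Consequently every point of $L$ is a singularity of $v$, contradicting the hypothesis that $v$ has only four (hence isolated) singularities.

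Granted non-collinearity, any three of the four singularities, say $p_1, p_2, p_3$, can be sent to $(0,0), (1,0), (0,1)$ by a unique element of $\Aff{2}{\C}$; replacing $v$ by its pushforward under this affine map yields an affine equivalent vector field with the desired singular locations. For the final step, I would write a general quadratic polynomial as $P(x,y) = \alpha_{20}x^2 + \alpha_{11}xy + \alpha_{02}y^2 + \alpha_{10}x + \alpha_{01}y + \alpha_{00}$ and turn the three conditions $P(0,0) = P(1,0) = P(0,1) = 0$ into linear equations on the coefficients: respectively $\alpha_{00} = 0$, $\alpha_{10} = -\alpha_{20}$, and $\alpha_{01} = -\alpha_{02}$. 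Renaming $(\alpha_{20}, \alpha_{11}, \alpha_{02}) = (a_0, a_1, a_2)$ produces the stated form of $P$, and an identical computation for $Q$ yields the coefficients $a_3, a_4, a_5$. The whole argument is routine; the only non-mechanical point is the brief root-counting observation used to rule out three collinear singularities.
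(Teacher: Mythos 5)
Your proof is correct and follows essentially the same route as the paper: the paper's only non-routine remark is precisely your root-counting observation that three collinear singularities would force $v$ to vanish on the whole line, and the normalization plus the linear conditions $\alpha_{00}=0$, $\alpha_{10}=-\alpha_{20}$, $\alpha_{01}=-\alpha_{02}$ are carried out exactly as you describe.
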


Under this “normal form” we can immediately compute the explicit expressions for the traces $t_k$ and determinants $d_k$ at each singular point $p_k$, for $k=1,2,3$.
Also, \autoref{lemma:Lambda-resultants} provides~us an~expression for~$\Lambda=\lambda_1\lambda_2\lambda_3$ as a~rational function of~$a_j$.
Explicit expressions for all these values are provided in~\autoref{lemma:expressionsSpectra} in \autoref{sec:explicit-formulas}.

\subsection{The hidden relation}%
\label{sub:hiddenRelation}
We have $7$~polynomial relations on $13$~variables $t_k$, $d_k$, $k=1,2,3$, $a_j$, $j=0,\dotsc,5$, $\Lambda$.
The common zero locus of these relations is the graph of the map $\mathcal M$ introduced in \autoref{sub:plan}.
Our goal is to eliminate $a_j$ from these equations.
Geometrically, this corresponds to projecting the graph of $\mathcal M$ to the codomain of $\mathcal M$, thus finding (the closure of) its image.

There are (at least) two ways to achieve this goal.
\subsubsection{Fast computation}
\label{subsub:fast}
Consider the ideal~$J$ generated by our relations, and use a computer algebra system to eliminate $a_j$ from this ideal.
It turns out that the resulting ideal~$I$ is generated by a~single polynomial~$H\in\C[t;d;\Lambda]$.
This polynomial will be the “hidden” relation.
We conclude that (the closure of) the image of $\mathcal M$ is a hypersurface in $\C^7$.
This agrees with the dimensional count: we had $7$ equations, then we eliminated $6$ variables, so we have $7-6=1$ equation left.

The polynomial~$H$ has~degree~$2$ in~$\Lambda$, and has~weighted degree~$14$, cf. \autoref{thm:hidden-relation-deg14}.
We have done this computation in \texttt{CoCoA~5}~\cite{CoCoA-5} and \texttt{Macaulay2}~\cite{Macaulay2}, getting the same polynomial~$H$ in both cases.
These computations are available at \cite{GitHubRepo}.

\subsubsection{Slow computation}
\label{subsub:slow}
Before implementing the fast algorithm described above, we have obtained the hidden relation following a different method which is closer to the ideas presented in \autoref{sec:predicting}.

The first step is to invert the map that sends a vector field $v$ to its finite spectra $(t;d)$. Consider the explicit formulas for $t_k$, $d_k$, see \autoref{sec:explicit-formulas}, as equations on $a_j$.
The formulas for $t_k$ are linear in $a_j$, and the formulas for $d_k$ have degree~$2$.
As in~\cite{TwinVectorFields}, \autoref{thm:EJformula} implies that the fourth zero of a vector field given~by~\eqref{eq:coefficients} is the point~$\left(-\frac{d_4}{d_2}, -\frac{d_4}{d_3}\right)$.
This fact adds two more linear equations to our system, and we need to solve a quadratic equation on the line in $\C^6$ given by $5$ linear equations.
Let $D\in\C[t;d]$ be the discriminant of this quadratic equation.
Then~$a_j$ are rational functions of $t_k$, $d_k$, and $\sqrt{D}$.
Then we substitute these expressions into~\eqref{eq:Lambda-resultants}, and get a formula for $\Lambda$ as a rational function of $t_k$, $d_k$, and $\sqrt{D}$.
This formula can be easily transformed into a polynomial equation $H=0$ quadratic in~$\Lambda$ (cf.~\autoref{sub:builiding}).
We have implemented this approach in \texttt{GiNaC}~\cite{GiNaC}, and the result agrees with the results of the “fast” method (cf.~\cite{GitHubRepo}).

\subsection{Proof of \autoref{thm:hidden-relation-deg14}}%
\label{sub:proof-deg14}
Let us prove that the polynomial~$H$ constructed above satisfies all assertions of~\autoref{thm:hidden-relation-deg14}.

The facts that~$H$ has weighted degree~$14$ and is irreducible were verified both in \texttt{CoCoA 5} and \texttt{Macaulay2}. By construction, the spectra of a quadratic vector field in $\V_2$ satisfies $H$, so assertion~\ref{item:H-relation} of \autoref{thm:hidden-no-details} is clear.

Let us prove that~$H$ satisfies assertion~\ref{item:H-independent}.
\begin{proposition}
    \label{prop:independence}
    The identity~$H=0$ is independent from the identities~\eqref{eq:EJ1}--\eqref{eq:CS} coming from the classical index theorems.
\end{proposition}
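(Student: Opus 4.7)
The plan is to~establish independence by~exhibiting an~explicit witness: a~point $(t^\star,d^\star,\lambda^\star)\in\C^{11}$ that satisfies all four classical relations \eqref{eq:EJ1}--\eqref{eq:CS} yet gives $H(t^\star;d^\star;\lambda^\star_1\lambda^\star_2\lambda^\star_3)\ne 0$. Producing such a~witness immediately shows that $H$~cannot be a~consequence of~the~classical relations (even after the~localization at~$\{d_k\ne0,\ \lambda_j\ne0\}$ where the~hidden relation naturally lives).

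To~construct the~witness I~would start from the~extended spectra of a~generic vector~field $v\in\V_2$, which satisfies both the~four classical identities and $H=0$. Keeping the~finite spectra $(t_k,d_k)$ fixed (so~\eqref{eq:EJ1} and~\eqref{eq:EJ2} remain valid), I~then deform the~characteristic numbers~$\lambda_j$ subject to~the~remaining two classical constraints
\[
\lambda_1+\lambda_2+\lambda_3=1,\qquad \sum_{j=1}^{3}\frac{(\lambda_j+1)^2}{\lambda_j}=16-\sum_{k=1}^{4}\frac{t_k^2}{d_k}.
\]
These two equations in three unknowns cut~out a~one-dimensional affine curve $\Gamma\subset\C^3$. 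Every point of~$\Gamma$ paired with the~frozen $(t_k,d_k)$ still lies on the~classical variety.

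The~decisive step is~to~show that the~product $\Lambda=\lambda_1\lambda_2\lambda_3$ is~non-constant along~$\Gamma$: for~this I~would compute the~differential of~$\Lambda|_\Gamma$ at~the~base~point via Lagrange multipliers or implicit differentiation with~respect to~a~local parameter on~$\Gamma$. A~short calculation shows that for~a~generic starting point~$v$ this differential is~non-zero, so~$\Lambda|_\Gamma$ is a non-constant regular function on an~algebraic curve, hence attains all but finitely many complex values. On~the~other hand, by~\autoref{thm:hidden-relation-deg14} the~univariate polynomial $\Lambda\mapsto H(t^\star;d^\star;\Lambda)$ is~a~genuine quadratic, with leading coefficient $H_2(t^\star;d^\star)\in\mathcal S$ non-vanishing for~generic fixed $(t^\star,d^\star)$. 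Consequently at~most two values of~$\Lambda$ along~$\Gamma$ can make $H$ vanish, and any other point of~$\Gamma$ furnishes the~required witness.

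The~main obstacle is~the~genericity verification: one must check that both $H_2(t^\star;d^\star)\ne 0$ and the~differential of~$\Lambda|_\Gamma$ at~the~base~point are non-zero. Both conditions are open and generically satisfied, and can~be verified from~the explicit formula for~$H$ available at~\cite{GitHubRepo}. Alternatively, one~can bypass the~witness construction entirely: the~same computer algebra systems (\texttt{CoCoA~5} and~\texttt{Macaulay2}) used to~compute~$H$ can directly reduce~$H$ modulo the~ideal generated by~\eqref{eq:EJ1}--\eqref{eq:CS} in~the~appropriate localization and observe that this reduction is~non-zero, which is the~same independence statement.
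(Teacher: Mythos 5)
Your witness-point argument is correct, and at bottom it rests on the same observation as the paper's proof: with the finite spectra frozen, the classical relations constrain $\lambda$ only through $\sigma_1(\lambda)=1$ and the value of $\sigma_2(\lambda)/\sigma_3(\lambda)$ fixed by \eqref{eq:BB}, so they leave $\Lambda=\sigma_3(\lambda)$ free, whereas $H$ is a nontrivial quadratic in $\Lambda$. The paper packages this as a one-line structural remark --- after the birational change $(\sigma_1,\sigma_2,\sigma_3)\mapsto(\sigma_1,\sigma_2/\sigma_3,\sigma_3)$ the hidden relation is the only one involving $\sigma_3$ --- while you instantiate it concretely by moving along the curve $\Gamma$ and choosing a point where $H\neq0$. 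Your route is more elementary and has the merit of making explicit \emph{why} ``being the only relation involving $\sigma_3$'' yields independence: one must know that $\sigma_3$ genuinely varies on the fibres of the classical variety, which is exactly the non-constancy of $\Lambda|_\Gamma$; the paper leaves this implicit. One suggestion: the Lagrange-multiplier/differential computation you invoke for that non-constancy is your weakest step as written and is unnecessary. It is cleaner to argue that if $\sigma_3$ were constant on a one-dimensional component of $\Gamma$, then all three elementary symmetric functions of $\lambda$ would be constant there, so every point of that component would be an ordered root-triple of one and the same cubic polynomial --- at most $3!$ points, contradicting positive dimension. With that replacement, and the observation that $H(t^\star;d^\star;\cdot)\not\equiv0$ for generic finite spectra (e.g.\ because $H_2$ is a nonzero element of $\mathcal S$ and the finite spectra are Zariski dense in the variety cut out by \eqref{eq:EJ1}--\eqref{eq:EJ2}, by the dominance result quoted in \autoref{sub:ind-proof}), your proof is complete.
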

\begin{proof}
    Denote by $\sigma_j(\lambda)$ the $j$-th elementary symmetric polynomial on $\lambda=(\lambda_1,\lambda_2,\lambda_3)$.
    Then~\eqref{eq:CS} takes the form $\sigma_1(\lambda)=1$, and~\eqref{eq:BB} takes the form
    \begin{equation}
        \label{eq:sigma2BySigma3}
        \frac{\sigma_2(\lambda)}{\sigma_3(\lambda)} = -\sum_{k=1}^4 \frac{t_k^2}{d_k} + 9,
    \end{equation}
    cf \eqref{eq:BB-deg2}.
    Finally, $\sigma_3(\lambda)$ is the $\Lambda$ from the hidden relation \eqref{eq:hiddenRelation}.

    Note that we can consider both classical relations, and \eqref{eq:hiddenRelation} as relations on $t_k$, $d_k$, $k=1,\dotsc,4$, and $\sigma_j$, $j=1,2,3$.
    After the birational coordinate change
    \begin{equation}
        \label{eq:cs}
        (t;d;\sigma_1, \sigma_2, \sigma_3)\mapsto \left(t;d;\sigma_1, \frac{\sigma_2}{\sigma_3}, \sigma_3\right),
    \end{equation}
    the hidden relation~\eqref{eq:hiddenRelation} is the only relation that includes $\sigma_3=\Lambda$.
    Therefore, it is independent from the classical identities.
\end{proof}

Finally, let us show that~$H$ satisfies assertion~\ref{item:H-full}.
Consider a~polynomial~$F\in\C[t;d;\lambda]$ vanishing at the extended spectrum of~every quadratic vector field in $\V_2$.
It is easy to see that for each vector field $v\in\V_2$ with enumerated singularities, there exists a~path in~$\V_2$ joining~$v$ to any other element of~$\V_2$ that corresponds to~the same vector field but with another prescribed order of~singularities at~infinity (and the same order for~finite zeros).
Hence, without loss of~generality, we may assume that $F$ is symmetric in~$\lambda$, and so it can~be rewritten as~$F(t;d;\lambda)=\tilde F(t;d;\sigma(\lambda))$.
Using the classical relations, we can get rid of $t_4$, $d_4$, $\sigma_1$, $\sigma_2$ to obtain a rational expression on $t_1,\ldots,d_3,\Lambda$. The inequalities $d_k\neq0$, $\lambda_j\neq0$ allow us to lift denominators and recover a polynomial expression.

Therefore, in order to check assertion~\ref{item:H-full}, it is enough to consider polynomials $F\in\C[t_1,\dotsc,d_3,\Lambda]$.
If such polynomial~$F$ vanishes on all extended spectra of~vector fields $v\in\V_2$, then it belongs to the vanishing ideal~$I$ of (the closure of) the image of~$\mathcal M$.
Due to \autoref{subsub:fast}, $I$ is the principal ideal generated by~$H$.
Since $H$ is irreducible, it follows that $F$ is either identically zero or a multiple of $H$.
Hence, $F=0$ follows from $H=0$ and thus $H$ satisfies assertion~\ref{item:H-full}.

Finally, $H$ satisfies all the assertions of \autoref{thm:hidden-no-details}.
\subsection{Reducing the degree of~$H$}%
\label{sub:proof-deg10}
Consider the polynomial~$H$ as~a~function of~the spectra of~all four finite zeros.
It turns out that this function is \emph{not} symmetric under permutations of the zeros.
More precisely, if we substitute into~$H$ formulas for $t_4$, $d_4$ instead of, e.g., $t_3$, $d_3$, the resulting function will be proportional to~$H$, but not equal to it.

Among components of~$H$, the “free term”~$H_0$ has the simplest formula.
Looking at this formula, we guessed that the function~${(d_1d_2d_3)}^{-2}d_4^4H$ is diagonal symmetric, i.e., it survives under permutations of the finite zeros of~$v\in\V_2$.
Moreover, this function is equal to a polynomial~$\tilde H\in\C[t_1,\dotsc,t_4;d_1,\dotsc,d_4]$ on the submanifold given by~\eqref{eq:EJ1} and~\eqref{eq:EJ2}.

Formally, one can check that $d_4^4H$ belongs to the ideal generated by the numerators of~\eqref{eq:EJ1},~\eqref{eq:EJ2}, and ${(d_1d_2d_3)}^2$, hence $d_4^4H-{(d_1d_2d_3)}^2\tilde H$ belongs to the ideal~$EJ$ generated by the numerators of~\eqref{eq:EJ1},~\eqref{eq:EJ2} for some polynomial~$\tilde H$, and we can find an~explicit formula for~$\tilde H$.
It is easy to see that this polynomial~$\tilde H$ satisfies the assertions of~\autoref{thm:hidden-relation-deg10}.

The polynomial~$\tilde H$ \emph{is not} uniquely defined.
Indeed, we can replace it by any other polynomial $\tilde H'$ such that ${(d_1d_2d_3)}^2(\tilde H-\tilde H')$ belongs to the ideal~$EJ$.
All the polynomials~$\tilde H$ we have found have more than $1{,}500$ terms, but some of them are diagonal symmetrizations of relatively short polynomials.
The explicit formula for one of these polynomials is given in \autoref{sec:explicit-formulas}.
\section{Index theory}\label{sec:indexTheory}
In this section we prove \autoref{thm:noIndexTheorem}.
In \autoref{sub:ind-motivation} we explain how this theorem relates to the index theory.
Next, in \autoref{sub:ind-proof} we prove this theorem modulo the main lemma whose proof is postponed till \autoref{sub:ind-key-lemma}.
\subsection{Motivation for \autoref{thm:noIndexTheorem}}\label{sub:ind-motivation}
The study of indices and index theorems is fundamental in the development of geometry and topology.
Theorems like the Poincaré-Hopf index theorem, the Gauss-Bonnet theorem or the Lefschetz fixed-point theorem are a few important examples.
These \emph{local-to-global} theorems, which relate the local behavior of some geometric object around “special” points to some global invariant (usually measured in some cohomology space) are indeed powerful and fascinating.

The theorems we previously knew that relate the extended spectra of a polynomial vector field are of all this type, where a sum of local contributions (an index) taken over all singular points equals a fixed number that depends only on the degree of the vector field.
The indices in question have been defined as \emph{residues} of meromorphic forms, and can also be understood as \emph{localizations} of characteristic classes (see for example \cite{Suwa1998}).

It was proved in~\cite{WoodsHole} that all four classical relations can be realized as particular cases of the so-called \emph{Woods Hole trace formula}, also known as the \emph{Atiyah-Bott fixed point theorem} (a generalization of the Lefschetz fixed point theorem due to Atiyah and Bott in the complex analytic case, and to Verdier in the algebraic case), and one could be tempted to think that the hidden relation would also be of this type.
This is not the case.
As it will be shown in this section, there are no more index theorems that relate the extended spectra other than those which can be derived from the classical ones. In particular, the hidden relation does not come from an index theorem.

\begin{remark}
    For quadratic vector fields we had only one hidden relation.
    However, as the degree $n$ of the vector fields grows, the number of hidden relations grows asymptotically as $n^2$.
    Indeed, on one hand a generic polynomial vector field of degree $n$ has exactly $n^2$ zeroes and $n+1$ singular points at infinity.
    Thus, the extended spectra consists of $2n^2+n+1$ complex numbers.
    On the other hand, the space of degree $n$ polynomial vector fields is of dimension $n^2+3n+2$.
    It is hard to imagine that there would be a countable number of index theorems to account for all these hidden relations.
    Therefore, in this way, it is not surprising that the hidden relations do not come from index theorems.
\end{remark}

Below we formalize the statement that the hidden relation for quadratic fields does not come from an index theorem.

An index ought to be a number we assign to an isolated singularity of a vector field, foliation, space or map.
This index should only depend on the local behavior of our geometric object around such singular point.
Moreover, this index should be invariant under analytic equivalence.
We know from Poincaré that a \emph{complex hyperbolic} singularity of a planar vector field is analytically linearizable.
Analytic invariance of the index implies that the index cannot depend on anything but the spectra.

\begin{remark}
    \label{rmk:expectedIndex}
    An index theorem for generic polynomial vector fields of degree~$n$ on~$\C^2$ should be such that the local-to-global equation is of the following form:
    \[
        \sum_{k=1}^{n^2} \operatorname{ind}_{\C^2}(v,p_k) + \sum_{j=1}^{n+1} \operatorname{ind}_{\mathcal{L}}(\F_v,w_j) = L(n),
    \]
    where $\operatorname{ind}_{\C^2}(v,p_k) $ is a rational function on the spectrum of the linearization matrix $Dv(p_k)$, $\operatorname{ind}_{\mathcal{L}}(\F_v,w_j)$ is a rational function on the characteristic number $\lambda_k$ of the singularity at infinity $w_k$, and $L(n)$ is a number that depends only on the degree $n$ of the vector field.
\end{remark}

Note that in particular the above equation may be rewritten as
\begin{equation}
    \label{eq:R=r}
    R(t;d) = r(\lambda),
\end{equation}
where $R$ is a rational function on the finite spectra $\{(t_k,d_k)\}$, and $r$ is a rational function on the characteristic numbers at infinity $\{\lambda_j\}$, and they are invariant under permutations of~finite or~infinite singularities.
Moreover, all classical relations \eqref{eq:EJ1}--\eqref{eq:CS} are of this form.
Recall that \autoref{thm:noIndexTheorem}, which we will prove below, states that \emph{any} relation of the form \eqref{eq:R=r} follows from the classical relations.

\subsection{Lack of new index theorems}
\label{sub:ind-proof}
In order to prove \autoref{thm:noIndexTheorem} we will first use the classical relations to eliminate some variables.
Since $r$ is symmetric, we can rewrite it in terms of $\sigma_1(\lambda)$, $\frac{\sigma_2(\lambda)}{\sigma_3(\lambda)}$ and $\Lambda=\sigma_3(\lambda)$, cf.~\eqref{eq:cs}, then substitute $\sigma_1(\lambda)=1$, see \eqref{eq:CS}:
\[
    r(\lambda)=\tilde r\left(\frac{\sigma_2(\lambda)}{\sigma_3(\lambda)}, \sigma_3(\lambda)\right).
\]

Note that for twin vector fields $v$, $v'$ with characteristic numbers $\lambda$, $\lambda'$ we have $r(\lambda)=r(\lambda')$, hence
\begin{equation}
    \label{eq:tilde-r-r'}
    \tilde r\left(\frac{\sigma_2(\lambda)}{\sigma_3(\lambda)}, \sigma_3(\lambda)\right)
    =
    \tilde r\left(\frac{\sigma_2(\lambda')}{\sigma_3(\lambda')}, \sigma_3(\lambda')\right).
\end{equation}
Twin vector fields have the same finite spectra, hence \eqref{eq:sigma2BySigma3} implies that $\frac{\sigma_2(\lambda)}{\sigma_3(\lambda)}=\frac{\sigma_2(\lambda')}{\sigma_3(\lambda')}$.
Therefore, \eqref{eq:tilde-r-r'} holds whenever~$\tilde r$ does not depend on its second argument.
The following lemma states that this is the only possibility.
\begin{lemma}
    \label{lemma:r-s2-s3}
    Let $\tilde r$ be a rational function such that for twin vector fields $v$, $v'$ with characteristic numbers at infinity $\lambda$, $\lambda'$ we have \eqref{eq:tilde-r-r'}.
    Then $\tilde r$ depends only on its first argument, $\tilde r(\xi, \chi)=\hat r(\xi)$.
\end{lemma}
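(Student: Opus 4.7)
The plan is to exploit the factorization of $\tilde r(\xi, x) - \tilde r(\xi, y)$ on the diagonal $x = y$ together with the existence of twin vector fields. Twins share their finite spectra, so by~\eqref{eq:sigma2BySigma3} they share the value of $\xi = \sigma_2(\lambda)/\sigma_3(\lambda)$, but by~\autoref{thm:moduliSpectra} they have distinct values of $\Lambda = \sigma_3$. Consequently \eqref{eq:tilde-r-r'} translates into a vanishing condition on a family of triples $(\xi, \Lambda, \Lambda') \in \C^3$ parametrized by $\V_2\sslash\Aff{2}{\C}$; if this family is Zariski dense in $\C^3$, the condition forces $\tilde r$ to be independent of its second argument.

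Concretely, write
\[
    \tilde r(\xi, x) - \tilde r(\xi, y) = (x-y)\,\Psi(\xi, x, y),
\]
which is possible because the left-hand side vanishes on $x = y$. Suppose for contradiction that $\tilde r$ depends nontrivially on its second argument, so $\Psi \not\equiv 0$ as a rational function. For generic $v \in \V_2$, the hypothesis~\eqref{eq:tilde-r-r'} combined with the inequality $\Lambda(v) \neq \Lambda(\tau v)$ yields $\Psi(\xi(v), \Lambda(v), \Lambda(\tau v)) = 0$. The central claim is that the rational map
\[
    \Phi\colon \V_2\sslash\Aff{2}{\C} \dashrightarrow \C^3,\qquad v \longmapsto \bigl(\xi(v),\,\Lambda(v),\,\Lambda(\tau v)\bigr),
\]
is dominant. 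Granted this, $\Psi$ vanishes on a Zariski dense subset of $\C^3$, so $\Psi \equiv 0$, contradicting the assumption; hence $\tilde r$ depends only on $\xi$ and the lemma follows.

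The dominance of $\Phi$ is the main obstacle. By Vieta's formulas applied to the quadratic of \autoref{thm:hidden-relation-deg14}, $\Lambda + \Lambda' = -H_1/H_2$ and $\Lambda\Lambda' = H_0/H_2$, so dominance of $\Phi$ reduces to the algebraic independence over $\C$ of the three rational functions $\xi$, $H_1/H_2$, and $H_0/H_2$ on the $6$-dimensional finite spectra space with coordinates $(t_1, t_2, t_3, d_1, d_2, d_3)$ (obtained after eliminating $t_4, d_4$ via~\eqref{eq:EJ1}--\eqref{eq:EJ2}). It suffices to exhibit a single point at which the $3\times 6$ Jacobian of $(t, d) \mapsto (\xi, H_1/H_2, H_0/H_2)$ has rank $3$. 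Although conceptually routine, this verification relies on the explicit form of $H$ (with its $996$ monomials) and is most naturally carried out in a computer algebra system along the lines of~\autoref{subsub:fast}.
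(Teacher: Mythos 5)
Your proposal is correct and follows essentially the same route as the paper: reduce to showing that $v\mapsto\bigl(\xi(v),\Lambda(v),\Lambda(\tau v)\bigr)$ is dominant onto $\C^3$, pass to the symmetric functions $\Lambda+\Lambda'=-H_1/H_2$ and $\Lambda\Lambda'=H_0/H_2$ via Vieta applied to the hidden relation, and verify dominance by a rank-$3$ Jacobian computation at one point of the Euler--Jacobi submanifold in a computer algebra system. Your explicit factorization $\tilde r(\xi,x)-\tilde r(\xi,y)=(x-y)\Psi(\xi,x,y)$ is a harmless repackaging of the paper's density argument (dominance already forces the image off the diagonal, so the factorization is not strictly needed), and the deferred computation is exactly the one the paper performs at $(1,1,2;1,2,-1)$ in \texttt{CoCoA}.
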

We shall prove this lemma in \autoref{sub:ind-key-lemma}.
Now we apply it to $\tilde r$, and rewrite \eqref{eq:R=r} in the form
\[
    R(t;d)=\hat r\left(\frac{\sigma_2(\lambda)}{\sigma_3(\lambda)}\right).
\]
Substituting \eqref{eq:sigma2BySigma3}, we get
\[
    R(t;d)=\hat r\left(-\sum_{k=1}^4\frac{t_k^2}{d_k}+9\right).
\]
Note that this relation \emph{does not} involve $\lambda_j$.
Finally, we use \eqref{eq:EJ1} and \eqref{eq:EJ2} to get rid of $t_4$ and $d_4$, and get a~relation of~the~form $\tilde R(t_1, t_2, t_3, d_1, d_2, d_3)=0$.
Now we have applied all the relations \eqref{eq:EJ1}--\eqref{eq:CS}, so we need to show that $\tilde R\equiv 0$.

It was proved in \cite[Theorem 1']{TwinVectorFields} that the map that takes each vector field in the normal form described in \autoref{lemma:normalizeSingularities} to the set $(t_1,d_1,\ldots,t_3,d_3)\in\C^6$ is a dominant map (i.e.~the image is dense in the Zariski topology). This implies that there are no non-trivial polynomials that vanish on every tuple $(t_1,d_1,\ldots,t_3,d_3)$.
In particular, $\tilde{R}=0$ for every generic vector field $v$ implies that $\tilde{R}\equiv 0$.

This completes the proof of \autoref{thm:noIndexTheorem}, modulo the main \autoref{lemma:r-s2-s3}.

\subsection{Proof of the main \autoref{lemma:r-s2-s3}}
\label{sub:ind-key-lemma}

In order to prove \autoref{lemma:r-s2-s3}, for each pair of twin vector fields $v$, $v'$, consider the triple $\left(\frac{\sigma_2(\lambda)}{\sigma_3(\lambda)}, \sigma_3(\lambda), \sigma_3(\lambda')\right)$.
Denote by $X\subset\C^3$ the set of these triples for all pairs of twin vector fields.
Then for $(\xi,\Lambda,\Lambda')\in X$ we have $\tilde r(\xi, \Lambda)=\tilde r(\xi, \Lambda')$.
Now, it is enough to show that $X$ has an inner point in $\C^3$.
Indeed, this would imply that~$X$ is Zariski dense in~$\C^3$, thus $\tilde r(\xi, \Lambda)=\tilde r(\xi, \Lambda')$ for \emph{all} $(\xi, \Lambda, \Lambda')\in\C^3$, hence $\tilde r$ does not depend on its second argument.

Note that $\Lambda=\sigma_3(\lambda)$ and $\Lambda'=\sigma_3(\lambda')$ are the roots of~\eqref{eq:hiddenRelation}.
Hence, Vieta's formulas imply
\begin{align*}
    \frac{H_1}{H_2}&=-\Lambda-\Lambda'\\
    \frac{H_0}{H_2}&=\Lambda\Lambda'.
\end{align*}
Consider the rational map $\Phi\colon\C^8\dashrightarrow\C^3$ given by
\begin{align*}
    \Phi(t;d)
        &=\left(-\sum_{k=1}^4\frac{t_k^2}{d_k}+9, -\frac{H_1}{H_2}, \frac{H_0}{H_2}\right)\\
        &=\left(\frac{\sigma_2(\lambda)}{\sigma_3(\lambda)}, \Lambda+\Lambda', \Lambda\Lambda'\right).
\end{align*}
Recall that we know explicit formula for~$H$, hence we have an explicit formula for~$\Phi$.
Let $\tilde\Phi$ be the restriction of~$\Phi$ to the affine submanifold given by \eqref{eq:EJ1}, \eqref{eq:EJ2}.
We computed the rank of $\tilde\Phi$ at the point $(1, 1, 2; 1, 2, -1)$ in \texttt{CoCoA}.
This rank equals~$3$, hence $\tilde\Phi(1, 1, 2; 1, 2, -1)$ is~an~inner point of~the~image of~$\tilde\Phi$.

Since \eqref{eq:EJ1}, \eqref{eq:EJ2} are the only relations on the finite spectra, the set
\[
    \tilde X=\set{(\xi, \Lambda+\Lambda', \Lambda\Lambda')|(\xi, \Lambda, \Lambda')\in X}\subset \operatorname{im} \tilde\Phi
\]
has an inner point as well.
Therefore, $X$ has an inner point.
This completes the proof of \autoref{lemma:r-s2-s3}, hence the proof of \autoref{thm:noIndexTheorem}.

\section*{Acknowledgements}

An early version of this work appeared as part of the doctoral dissertation of the second author, who would like to thank Yulij Ilyashenko, John Hubbard, and John Guckenheimer for their guidance throughout his Ph.D.
We are also grateful to Dominique Cerveau, Étienne Ghys, Adolfo Guillot, and Frank Loray for fruitful conversations on this project.
We would also like to thank Nataliya Goncharuk for proofreading the preliminary versions of the article.

\appendix
\section{Explicit formulas}
\label{sec:explicit-formulas}

In this appendix we include the explicit expressions for the spectra and the product of characteristic numbers~$\Lambda$ announced in \autoref{sub:normal-form}.

As in \autoref{lemma:normalizeSingularities}, put $p_1=(0,0)$, $p_2=(1,0)$, $p_3=(0,1)$.
Consider a~quadratic vector field~$v$ having isolated singularities at $p_k$, $k=1,2,3$.
This vector field is given~by~\eqref{eq:coefficients}.
\begin{lemma}
    \label{lemma:expressionsSpectra}
    The spectra $(t_k,d_k)$ of~$v$ at~$p_k$, $k=1,2,3$, is given by
    \begin{align*}
        t_1 &= -a_0-a_5;         & d_1 &= -a_2a_3+a_0a_5; \\
        t_2 &= a_0+a_4-a_5;      & d_2 &= -a_1a_3+a_2a_3+a_0a_4-a_0a_5; \\
        t_3 &= -a_0+a_1+a_5;     & d_3 &= a_2a_3-a_2a_4-a_0a_5+a_1a_5.
    \end{align*}
    The numerator and the denominator in~\eqref{eq:Lambda-resultants} are given~by
    \begin{align*}
        a_2^{-1}\Res(F, G)&=-(d_1d_2+d_2d_3+d_1d_3)=\frac{d_1d_2d_3}{d_4},\\
        a_2^{-1}\Res(G', G)
          &= 4a_0^3a_2 - a_0^2a_1^2 + 2a_0^2a_1a_5 - 12a_0^2a_2a_4 -
             a_0^2a_5^2 + 2a_0a_1^2a_4 + \\
          & \mathrel{\phantom{=}}
             18a_0a_1a_2a_3 - 4a_0a_1a_4a_5 - 18a_0a_2a_3a_5 + 12a_0a_2a_4^2 +
             2a_0a_4a_5^2 - \\
          & \mathrel{\phantom{=}}
             4a_1^3a_3 + 12a_1^2a_3a_5 - a_1^2a_4^2 - 18a_1a_2a_3a_4 -
             12a_1a_3a_5^2 + 2a_1a_4^2a_5 + \\
          & \mathrel{\phantom{=}}
             27a_2^2a_3^2 + 18a_2a_3a_4a_5 - 4a_2a_4^3 + 4a_3a_5^3 - a_4^2a_5^2.
    \end{align*}
    Their ratio is equal to $\Lambda=\lambda_1\lambda_2\lambda_3$.
\end{lemma}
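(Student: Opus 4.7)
The plan is to prove each of the three assertions by direct, elementary computation, exploiting the very explicit form of $v$ given by~\eqref{eq:coefficients}.

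For the spectra at $p_1,p_2,p_3$: I would compute the four partial derivatives
\[
    P'_x = 2a_0 x + a_1 y - a_0, \quad P'_y = a_1 x + 2a_2 y - a_2,
\]
\[
    Q'_x = 2a_3 x + a_4 y - a_3, \quad Q'_y = a_4 x + 2a_5 y - a_5,
\]
then substitute $(x,y)=(0,0),(1,0),(0,1)$. Reading off $\tr Dv(p_k)$ and $\det Dv(p_k)$ produces the six claimed expressions for $t_k$ and $d_k$.

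For the polynomials $F$ and $G$: I would apply the definitions in~\autoref{sec:product-infty}. A direct expansion gives $\tilde P(z,w)=a_2 w^2+a_1 w+a_0 - (a_0+a_2 w)z$ and $\tilde Q(z,w)=a_5 w^2+a_4 w+a_3 - (a_3+a_5 w)z$, so $F(w)=a_2 w^2+a_1 w+a_0$ and
\[
    G(w)=wF(w)-\tilde Q(0,w)=a_2 w^3+(a_1-a_5)w^2+(a_0-a_4)w-a_3.
\]
The formula for $a_2^{-1}\Res(G',G)$ then follows from the standard identity $\Res(G',G)=-a_2\cdot\operatorname{disc}(G)$ for cubics, together with the explicit discriminant $B^2 C^2-4AC^3-4B^3 D-27A^2 D^2+18ABCD$ evaluated at $(A,B,C,D)=(a_2,\,a_1-a_5,\,a_0-a_4,\,-a_3)$; collecting monomials reproduces the twenty-term expression in the statement.

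For the numerator formula, I would compute $\Res(F,G)$ from its Sylvester $5\times 5$ determinant (or, equivalently, via $\Res(F,G)=a_2^2\prod_{i=1}^3 F(w_i)$) and verify that it equals $-a_2(d_1d_2+d_1d_3+d_2d_3)$. Both sides are polynomials of total degree $5$ in $a_0,\dotsc,a_5$, so this reduces to a bounded-degree identity that I would either expand by hand or certify in a computer algebra system as a sanity check of the companion code at~\cite{GitHubRepo}. Finally, the second equality $-(d_1d_2+d_1d_3+d_2d_3)=d_1d_2d_3/d_4$ is just the Euler–Jacobi relation~\eqref{eq:EJ1} multiplied through by $d_1d_2d_3d_4$; and the last sentence of the lemma, that the ratio of the two resultants equals $\Lambda$, is already \autoref{lemma:Lambda-resultants}. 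The main (mild) obstacle is the cross-check of the degree-$5$ identity $a_2^{-1}\Res(F,G)=-(d_1d_2+d_1d_3+d_2d_3)$, where the symbolic expansion is tedious enough that machine verification is preferable; conceptually, the identity can also be read as saying that the symmetric function of the finite-spectra determinants on the right is what governs the behavior of $P$'s leading form along the roots of $G$, which is consistent with the bijection between singularities of $v$ and the intersections of the corresponding projective curves.
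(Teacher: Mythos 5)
Your proposal is correct and follows essentially the same route as the paper, which simply notes that the second equality for $\Res(F,G)$ follows from \eqref{eq:EJ1}, the final conclusion from \autoref{lemma:Lambda-resultants}, and everything else by direct computation. Your explicit derivatives, the expressions for $F$ and $G$, and the identification of $a_2^{-1}\Res(G',G)$ with $-\operatorname{disc}(G)$ all check out (as does the sample verification of the degree-$5$ identity), so the only difference is that you organize the computation slightly more explicitly than the paper does.
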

The second equality in the expression for~$\Res(F, G)$ follows from~\eqref{eq:EJ1}, and the last conclusion follows from \autoref{lemma:Lambda-resultants}.
The rest of the lemma can be proved by a~direct computation.
Though the formula for $\Res(F, G)$ is much shorter than the formula for $\Res(G', G)$, we have no geometric interpretation for neither of these formulas.

Finally, we provide explicit formulas for the polynomials~$\tilde H_j$ from \autoref{thm:hidden-relation-deg10}.
In order to save space, we write the formula for polynomials~$\hat H_j$ such that $\tilde H_j$ are their symmetrizations with respect to~the diagonal action of~$S_4$,
\[
    \tilde H_j = \sum_{\sigma\in S_4} \hat H_j(t_{\sigma(1)}, t_{\sigma(2)}, t_{\sigma(3)}, t_{\sigma(4)}, d_{\sigma(1)}, d_{\sigma(2)}, d_{\sigma(3)}, d_{\sigma(4)}).
\]

Here are the explicit formulas.
\bgroup\allowdisplaybreaks
\begin{align*}
  \hat H_0
      &=-4 d_1^3d_2d_3 + 4d_1^2d_2^2d_3\\
  \hat H_1
      &=2 d_2^2d_3t_1^3t_2 + 2d_3^2d_4t_1^3t_2 - 6d_1d_3^2t_1^2t_2^2 -
        2d_3^2d_4t_1^2t_2^2 -2d_2^2d_4t_1^2t_2t_3 +\\
      &\phantom{{}={}}
        6d_1d_4^2t_1^2t_2t_3 + 36d_1^2d_2d_3t_1^2 + 12d_1d_2^2d_3t_1^2 +
        36d_2^3d_3t_1^2 + 36d_2^2d_3^2t_1^2 +\\
      &\phantom{{}={}}
        24d_2^2d_3d_4t_1^2 - 48d_1^2d_2d_3t_1t_2 + 24d_1d_2d_3^2t_1t_2 -
        24d_1^2d_3d_4t_1t_2 -\\
      &\phantom{{}={}}
        24d_1d_3^2d_4t_1t_2 -36d_3^3d_4t_1t_2 -36d_3^2d_4^2t_1t_2 +
        216d_1^3d_2d_3 -216d_1^2d_2^2d_3\\
  \hat H_2
      &=4 d_2t_1^4t_2^2t_3^2 - 4d_3t_1^3t_2^3t_3^2 - 4d_2t_1^4t_2^2t_3t_4 +
        8d_2t_1^3t_2^2t_3^2t_4 -4d_1t_1^2t_2^2t_3^2t_4^2 -\\
      &\phantom{{}={}}
        8d_2d_3t_1^4t_2^2 - 16d_3^2t_1^4t_2^2 + 8d_3d_4t_1^4t_2^2 +
        8d_1d_3t_1^3t_2^3 + 16d_3^2t_1^3t_2^3 - 8d_3d_4t_1^3t_2^3 -\\
      &\phantom{{}={}}
        2d_2d_3t_1^4t_2t_3 + 2d_2d_4t_1^4t_2t_3 + 16d_4^2t_1^4t_2t_3 +
        2d_1d_3t_1^3t_2^2t_3 + 30d_2d_3t_1^3t_2^2t_3 -\\
      &\phantom{{}={}}
        8d_1d_4t_1^3t_2^2t_3 - 22d_2d_4t_1^3t_2^2t_3 - 2d_3d_4t_1^3t_2^2t_3 -
        32d_4^2t_1^3t_2^2t_3 - 30d_1d_2t_1^2t_2^2t_3^2 +\\
      &\phantom{{}={}}
        30d_1d_4t_1^2t_2^2t_3^2 + 16d_4^2t_1^2t_2^2t_3^2 - 2d_1d_2t_1^3t_2t_3t_4 +
        2d_2d_3t_1^3t_2t_3t_4 + 30d_1d_2t_1^2t_2^2t_3t_4 -\\
      &\phantom{{}={}}
        30d_1d_3t_1^2t_2^2t_3t_4 + 81d_2^2d_3t_1^4 -54d_2^2d_3t_1^3t_2 -
        162d_2d_3^2t_1^3t_2 - 216d_3^2d_4t_1^3t_2 +\\
      &\phantom{{}={}}
        81d_1^2d_3t_1^2t_2^2 + 405d_1d_3^2t_1^2t_2^2 + 135d_3^2d_4t_1^2t_2^2 -
        162d_2^2d_3t_1^2t_2t_3 - 81d_1^2d_4t_1^2t_2t_3 +\\
      &\phantom{{}={}}
        54d_2^2d_4t_1^2t_2t_3 - 405d_1d_4^2t_1^2t_2t_3 +
        162d_2d_4^2t_1^2t_2t_3 + 162d_1^2d_2t_1t_2t_3t_4 -\\
      &\phantom{{}={}}
        972d_1^2d_2d_3t_1^2 - 324d_1d_2^2d_3t_1^2 - 972d_2^3d_3t_1^2 -
        972d_2^2d_3^2t_1^2 - 648d_2^2d_3d_4t_1^2 +\\
      &\phantom{{}={}}
        1296d_1^2d_2d_3t_1t_2 - 648d_1d_2d_3^2t_1t_2 + 648d_1^2d_3d_4t_1t_2 +
        648d_1d_3^2d_4t_1t_2 +\\
      &\phantom{{}={}}
         972d_3^3d_4t_1t_2 + 972d_3^2d_4^2t_1t_2 - 2916d_1^3d_2d_3 + 2916d_1^2d_2^2d_3.
\end{align*}
\egroup
\nocite{CoCoALib}
\bibliographystyle{alpha}
\bibliography{spectraQVFs}
\end{document}